\documentclass[11pt]{article}
\usepackage[a4paper,margin=1in]{geometry}
\usepackage{amsmath,amssymb,amsthm,mathtools}
\usepackage{microtype}
\usepackage[colorlinks=true,linkcolor=blue,citecolor=blue,urlcolor=blue]{hyperref}
\usepackage{authblk}

\usepackage{pgfplots}
\pgfplotsset{compat=1.18}

\newtheorem{theorem}{Theorem}[section]
\newtheorem{proposition}[theorem]{Proposition}
\newtheorem{lemma}[theorem]{Lemma}
\newtheorem{corollary}[theorem]{Corollary}
\newtheorem{remark}[theorem]{Remark}
\newtheorem{definition}[theorem]{Definition}
\newtheorem{assumption}[theorem]{Assumption}

\DeclareMathOperator{\Exp}{Exp}
\newcommand{\E}{\mathbb E}
\newcommand{\Prob}{\mathbb P}
\newcommand{\one}{\mathbf 1}
\newcommand{\dd}{\mathrm d}
\newcommand{\Hb}{\mathsf H_{\mathrm b}}

\title{
Decision-Centric Large Deviations\\ for Data-Driven Capital Buffers in Ruin Models
}

\author[1]{Yf Henkes\thanks{\href{mailto:y.j.henkes@student.tue.nl}{y.j.henkes@student.tue.nl}}}
\author[2]{Bart P.G. van Parys\thanks{\href{mailto:bart.van.parys@cwi.nl}{bart.van.parys@cwi.nl}}}
\author[2,1]{Bert Zwart\thanks{\href{mailto:bert.zwart@cwi.nl}{bert.zwart@cwi.nl}}}
\affil[1]{Eindhoven University of Technology, Eindhoven, The Netherlands}
\affil[2]{CWI, Amsterdam, The Netherlands}

\begin{document}
\maketitle

\begin{abstract}
  We consider an insurance risk model with a random walk structure in which the underlying probability law is unknown.
  A decision maker observes a statistic \(Q_n\) computed from \(n\) historical observations and then needs to choose a capital buffer of the form \(C_n=n f(c,Q_n)\), where \(c=\log(1/\delta)/n\) balances the amount of data and the tolerated ruin probability \(\delta\).
  The classical safe capital buffer is inversely proportional to the \emph{adjustment coefficient} \(\gamma\), the exponential rate at which the ruin probability decays; when the law is unknown, this coefficient must be inferred from the data.
The profile $f$ couples the statistical cost of observing an atypical historical statistic with the future ruin exponent induced by the resulting decision. We study the joint ex ante probability — over both the historical sample and an independent future risk process — that the future maximum exceeds the data-dependent buffer. 
  
  We show that the naive plug-in rule fails to achieve the prescribed logarithmic decay exponent $c$, illustrating the adverse impact of model uncertainty when making decisions under rare-event constraints. We then identify a profile \(f^*=f^*(c,Q_n)\) with the following appealing properties: its lower semicontinuous majorants are safe, while regular continuous rules that fall strictly below \(f^*\) are, under mild conditions, unsafe.
  We illustrate the potential applicability of our framework by developing three parametric examples.
  In nonparametric settings, we show that a single exponential envelope leads to degenerate capital buffers, whereas a two-level exponential envelope yields a nondegenerate capital buffer which we prove to be safe. 
\end{abstract}

\section{Introduction}

Classical ruin theory studies the probability that the surplus process of an
insurance portfolio crosses a prescribed capital buffer level.  This problem is
mathematically equivalent to the all-time maximum \(M\) of a random walk with
increment distribution \(P\) exceeding a level \(x\).  A central result in risk
theory is Lundberg's inequality,
\begin{equation}\label{eq:lundberg-bound}
        \Prob_P\{M>x\}\le \exp\{-\gamma(P)x\},\qquad x\ge0 .
\end{equation}
Here \(\gamma(P)\) is the adjustment coefficient; textbook treatments include
\cite{asmussen2003, asmussen_albrecher, grandell1991}.
If \(\gamma(P)\) is known and the goal is to keep the probability of ruin below
\(\delta\), a safe oracle choice is \(C^*=\log(1/\delta)/\gamma(P)\).
In practice \(P\) is unknown and must be inferred from data.  Replacing
\(\gamma(P)\) by an estimator is not innocuous: the resulting capital buffer level is
random, and the event \(\{M>C_n\}\) can occur through a combination of an
unusually large future maximum and an unusually small data-driven capital
buffer choice.

This paper studies that interaction in the regime
\(\delta=e^{-cn}\), where the amount of data and the logarithmic ruin tolerance
scale together.
We focus on this regime because it is where data uncertainty bites.
Were $\delta$ held fixed, a consistent estimator of the adjustment coefficient would make plug-in calibration asymptotically harmless. When $\log(1/\delta)$ is proportional to $n$, residual statistical fluctuations in the estimated increment law, however, occur on the same exponential scale as the target ruin probability. The decision rule must therefore account for combinations of statistically atypical historical data and an unusually large future maximum.
The central point is that the statistical object of interest is not merely the adjustment coefficient itself, but the rare-event performance of the {\em decision} obtained after inserting data into a capital buffer decision rule.
The safety notion used throughout is \emph{ex ante}: probability is taken jointly over the historical sample and the independent future risk process. Consequently, the large-deviation cost of observing an atypical statistic may contribute to the prescribed exponent \(c\). 

Our work is related to several streams of literature at the interface of ruin
theory, statistics, large deviations, and model uncertainty.  
For work on properties of statistical estimators
for the adjustment coefficient and related quantities see
~\cite{csorgo_teugels1990, embrechts_mikosch1991, grandell1979, hall_teugels_vanmarcke1992, hipp1989, pitts_gruebel_embrechts1996}. 
Large-deviation properties of estimators of adjustment coefficients and related rate
functions were studied in
\cite{duffy_metcalfe2005_cumulative,duffy_metcalfe2005_ld_estimating,duffy_meyn2011}.   For Bayesian viewpoints, see
\cite{ganesh_oconnell1999, ganesh_oconnell2000, macci2014, macci_piccioni2023}.
Thus the existing statistical literature quantifies uncertainty in ruin-related estimators.
In contrast, we do not aim to estimate the true value of the adjustment
coefficient. Instead, we derive data-dependent decisions for which the
joint probability of an atypical historical sample and subsequent ruin
has a prescribed asymptotic decay exponent.

Connections also arise with statistical formulations of robust optimization,
empirical likelihood, and data-driven decision-making; see, for instance,
\cite{agrawal-juneja-glynn-heavy,bental2013,duchi_glynn_namkoong2021,vanparys-zwart-robust-mean}.  The ``data-to-decisions'' viewpoint of
\cite{vanparys_esfahani_kuhn2021} is particularly close in spirit: using
large-deviation ideas, the authors show that certain relative-entropy DRO
formulations are optimal for predictor-prescriptor pairs under out-of-sample
disappointment constraints, and \cite{sutter_vanparys_kuhn2024} develops a
general optimality framework for data-driven optimization. 

Related approaches
to uncertainty quantification and sensitivity analysis for stochastic systems
appear in \cite{atar_chowdhary_dupuis2015,
dupuis_katsoulakis_pantazis_plechac2016,
dupuis_katsoulakis_pantazis_reybellet2020, lam2016}.   Our formulation differs
from standard distributionally robust expected-loss problems: we aim to control the behavior of a rare event at minimal cost, the optimized
quantity is a nonlinear ruin exponent, and the relative-entropy terms enter
through the large-deviation geometry of empirical statistics.

We now give an overview of our contributions and the organization of the rest of this paper. We formulate the problem and state some preliminary results in Section~\ref{sec:problem-formulation}. In
Section~\ref{sec:plugin}, we formally show that the empirical plug-in capital buffer
\(cn/\gamma(P_n)\), with notation formalized below, is unsafe: the exponential
decay rate of the resulting ruin probability is strictly smaller than the
prescribed target decay rate \(c\).  This motivates
a formulation in which the decision maker observes a statistic \(Q_n\) satisfying
an LDP with rate function \(I_P\) under model \(P\), while the unknown law belongs
to a known model class \(\mathcal P\).  For capital buffers of the form
\(C_n=nf(c,Q_n)\), the relevant exponent balances the cost of observing an
atypical statistic against the future ruin exponent induced by the capital buffer
chosen at that statistic:
\begin{equation}
\label{eq:introvariation}
     \inf_q \{ I_P(q)+\gamma(P)f(c,q)\}.
\end{equation}
The goal is to choose \(f(c,q)\) so that this value is at least \(c\) for every
\(P\in\mathcal P\).  This leads to the pointwise profile
\begin{equation}
  \label{keyformula}
  f^*(c,q)=\sup_{P\in\mathcal P}
  \frac{(c-I_P(q))_+}{\gamma(P)}
\end{equation}
which we show represents the tipping point between safe and unsafe profiles.
Indeed, any lower semicontinuous majorant of \(f^*\) is safe, while any regular continuous
rule that falls below \(f^*\) at a point where the LDP lower-bound mechanism can
be localized fails to attain the target exponent; Section~\ref{sec:safe-profiles}
gives the precise statements.

The rest of the paper develops \eqref{keyformula} in two directions.  Section~\ref{sec:examples}
works out three structured parametric examples: Rademacher increments, Gaussian
increments with unknown mean and variance, and differences of independent
exponential variables.  

Section~\ref{sec:nonparametric} treats nonparametric
envelope classes; these come with several conceptual challenges. If one uses only the 
unit exponential envelope \(\mathcal P_a=\{P:\E_Pe^{aX}\le1\}\) for some fixed $a>0$, then the resulting unrestricted
nonparametric problem degenerates to the capital buffer $C_n=nc/a$, since arbitrarily small amounts of unseen
right-tail mass can drive the worst-case adjustment coefficient to the boundary
\(a\).  We therefore introduce a two-level exponential envelope, adding the
constraint \(\E_Pe^{bX}\le B\) for some \(b>a\) and $B>1$.  This stronger envelope rules out
the off-support spike mechanism while preserving a tractable KL-projection
structure. 

A second technical challenge is that the weak-topology version of
Sanov's theorem does not directly control the unbounded exponential
moments entering the nonparametric profile. Weak convergence of
probability laws does not imply convergence of
\(\int e^{r x}\,dQ(x)\), and the regularity conditions needed for a
direct application of Proposition~\ref{prop:rate} or Varadhan's lemma are therefore
not automatic. We instead prove safety directly from the
finite-dimensional dual representation and an exponential aggregation
bound, inspired by similar arguments in \cite{agrawal-juneja-glynn-heavy} and \cite{vanparys-zwart-robust-mean}.

To make the construction implementable, we derive a
finite-dimensional dual representation. Weak duality yields a
conservative profile for arbitrary candidate laws \(Q\). Whenever
\(\mathbb E_Q e^{bX}<\infty\), and hence in particular when \(Q\) is
an empirical law, strong duality holds and the representation is
exact. The resulting profile can be computed through a
finite-dimensional optimization problem with few variables over a
compact feasible set;  we refer to Section~\ref{subsec:two-level-envelope} for details.
We provide a compact numerical illustration of our framework in Section~\ref{sec:numerical-illustration}. Using a Gaussian benchmark distribution, we compare the empirical plug-in rule, the Gaussian parametric rule, and the nonparametric rule in terms of the normalized finite-sample exponent induced by the Cram\'er--Lundberg upper bound. 

\subsection*{Notation}

Throughout, \(X\) denotes a real-valued increment and \(P\) a law for it.  Expectation and probability under \(P\) are written \(\E_P\) and \(\Prob_P\), with the argument in parentheses or set braces; square brackets are reserved for conditional expectations.  The subscript is dropped when the law is clear from context.  We write \(\Lambda_P(\theta)=\log\E_P e^{\theta X}\) for the cumulant generating function, defined wherever the expectation is finite.  For finite nonnegative measures \(\nu,\mu\), the relative entropy is \(D(\nu\|\mu):=\int\log(\dd\nu/\dd\mu)\,\dd\nu\) if \(\nu\ll\mu\) and \(+\infty\) otherwise, where \(\ll\) denotes absolute continuity.  Finally, \(x_+=\max\{x,0\}\), \(\one_A\) is the indicator of a set \(A\), \(\delta_x\) is the Dirac mass at \(x\), and \(P_n=n^{-1}\sum_{i=1}^n\delta_{X_i}\) is the empirical law of a sample \(X_1,\dots,X_n\).

\section{Problem formulation}
\label{sec:problem-formulation}

Let \(\mathcal P\) be a class of probability laws for real-valued increments
\(X\), and let \(\mathcal Q\) be the topological space in which the observed statistic takes its values; in the examples below \(\mathcal Q\) is Polish. 
We impose throughout the standard ruin-theoretic assumptions on the increment law
\(P\in\mathcal P\), and pause to record the role of each. First, we assume
\(\mathbb E_P|X|<\infty\), so that the drift \(\mathbb E_P X\) is well defined and the
cumulant generating function has right derivative
\(\Lambda_P'(0+)=\mathbb E_P X\). Second, we impose the negative-drift condition
\(\mathbb E_P X<0\), which ensures that the all-time maximum is finite and ruin is
not certain. Third, \(\Prob_P(X>0)>0\) is a nontriviality condition, ensuring in
particular that \(\Lambda_P\) can eventually become positive. Finally, we assume
that \(\Lambda_P\) is finite on some interval \([0,b]\), with \(b>0\), or
equivalently that \(\mathbb E_P e^{bX}<\infty\). On \([0,b]\), the function
\(\Lambda_P\) is convex, with \(\Lambda_P(0)=0\) and
\(\Lambda_P'(0+)=\mathbb E_P X<0\), so it initially drops below \(0\). The remainder of the assumptions pertain to our ruin and observation model:

\paragraph{Ruin model.} For an arbitrary increment law we define the adjustment coefficient by
\[
        \gamma(P):=\sup\{\theta\ge0:\Lambda_P(\theta)\le0\}=\sup\{\theta\ge0:\E_P e^{\theta X}\le1\},
\]
which is nonnegative because \(\theta=0\) is always feasible (\(\Lambda_P(0)=0\)).  Since \(\Lambda_P\) is convex and lower semicontinuous with \(\Lambda_P(0)=0\), the set \(\{\theta\ge0:\Lambda_P(\theta)\le0\}\) is a closed interval \([0,\gamma(P)]\), so \(\gamma(P)\ge\vartheta\iff\E_P e^{\vartheta X}\le1\) for every \(\vartheta>0\).  The Lundberg upper bound \eqref{eq:lundberg-bound} holds at this generality: the supermartingale argument bounds \(\Prob_P\{M>x\}\le e^{-\theta x}\) whenever \(\E_P e^{\theta X}\le1\), hence \(\Prob_P\{M>x\}\le e^{-\gamma(P)x}\).  As a standing assumption we require the true increment law to satisfy the stronger Cram\'er condition \(\E_P e^{\gamma X}=1\) with \(\gamma=\gamma(P)\in(0,b)\); by convexity this positive root is then unique and equals \(\gamma(P)\).  Under it the coefficient governs the decay of the future ruin probability exactly: writing \(M\) for the all-time maximum of an independent future random walk with increment law \(P\), the following asymptotic, due to Cram\'er and Lundberg, holds \cite{asmussen2003} alongside the upper bound \eqref{eq:lundberg-bound} (see also \cite[Proposition~2.1]{nuyens_zwart2005}):
\begin{equation}\label{eq:ruin-log-asymp}
  \lim_{x\to\infty}\frac1x\log \Prob_P\{M>x\}=-\gamma(P).
\end{equation}
We record two analytic facts about the adjustment coefficient.  Because \(\gamma\) is interior to \([0,b]\), \cite[Lemma~2.2.5]{dembo_zeitouni} shows that \(\Lambda_P\) is differentiable at \(\gamma\), with \(\Lambda_P'(\gamma)=\E_P Xe^{\gamma X}/\E_P e^{\gamma X}\); since \(\E_P e^{\gamma X}=1\), this equals \(m_\gamma:=\E_P Xe^{\gamma X}\).  As \(\Lambda_P\) is convex with \(\Lambda_P(0)=\Lambda_P(\gamma)=0\) and \(\Lambda_P'(0^+)<0\), its slope at the second root \(\gamma\) is strictly positive; hence \(m_\gamma\in(0,\infty)\).  The interior condition also licenses differentiating \(\Lambda_P\) under the expectation near \(\gamma\); this is what drives the perturbation argument of Section~\ref{sec:plugin}, where we show that plug-in calibration of \(\gamma\) is unsafe (Proposition~\ref{prop:plugin}).

\paragraph{Observation model.} The statistic observed by the decision maker is a \(\mathcal Q\)-valued random
variable \(Q_n\) computed from \(n\) i.i.d.\ historical observations
\(X_1,\ldots,X_n\) which are independent of $M$, with empirical law \(P_n\).
In a nonparametric setting, we may take \(Q_n=P_n\), but \(Q_n\) may also be a simple statistic such as a sample mean
and variance.  A data-driven capital buffer is a measurable function of the form \(C_n=n f(c,Q_n)\), where \(c>0\) is the target exponent connecting the ruin probability tolerance $\delta$ with the amount of data $n$ through $\delta=e^{-cn}$.
We assume that the statistic $Q_n$ satisfies a large-deviation principle.  We use 
standard terminology and notation from large-deviation theory
\cite{dembo_zeitouni,dupuis_ellis}.

\begin{assumption}[LDP for the data statistic]\label{ass:LDP}
For every \(P\in\mathcal P\), the random variable \(Q_n\) satisfies an LDP on
\(\mathcal Q\) with speed \(n\) and good rate function
\(I_P:\mathcal Q\to[0,\infty]\).  Thus, for closed \(F\subset\mathcal Q\),
\[
\limsup_{n\to\infty}\frac1n\log \Prob_P\{Q_n\in F\}
\le -\inf_{q\in F}I_P(q),
\]
and for open \(G\subset\mathcal Q\),
\[
\liminf_{n\to\infty}\frac1n\log \Prob_P\{Q_n\in G\}
\ge -\inf_{q\in G}I_P(q).
\]
\end{assumption}

\paragraph{Objective.} We say that a measurable function \(f: (0,\infty) \times {\mathcal Q}\rightarrow [0,\infty] \) is \emph{safe at
exponent \(c\)} for \(\mathcal P\) if
\begin{equation}
\label{eq:safe}
     \limsup_{n\to\infty}\frac1n\log
        \Prob_P\{M>n f(c,Q_n)\}
        \le -c,
        \qquad P\in\mathcal P.
\end{equation}

The safety criterion in \eqref{eq:safe} is an \emph{ex ante} criterion: the probability is taken jointly over the historical observations, through \(Q_n\), and the independent future maximum \(M\).  Under the present criterion, the large-deviation cost of observing an atypical statistic may contribute to the overall decay exponent of the joint event. Thus, a statistically unlikely observation can partially or entirely account for the target exponent \(c\).

In some cases, no finite $f$ exists given a fixed exponent $c$, which is the reason we allow $f$ to take infinite values. In the explicit parametric examples presented in Section~\ref{sec:examples}, we will present intuition on when this can happen.

Under the assumptions of the ruin model and of the observation model, the two independent sources of randomness (the future maximum \(M\), controlled through the Lundberg bound \eqref{eq:lundberg-bound} and the asymptotic \eqref{eq:ruin-log-asymp}, and the statistic \(Q_n\), obeying the LDP of Assumption~\ref{ass:LDP}) combine into a single coupled setting.  A standard large-deviation argument then governs the data-driven ruin event \(\{M>nf(c,Q_n)\}\) and identifies its exponential decay rate.

\begin{proposition}[Decay rate of a data-driven ruin probability]\label{prop:rate}
Let Assumption~\ref{ass:LDP} hold and let
\(f(c,\cdot):\mathcal Q\to[0,\infty]\) be measurable.  Define
\[
        J_P(f,c):=
        \inf_{q\in\mathcal Q}\{I_P(q)+\gamma(P)f(c,q)\}.
\]
If \(q\mapsto f(c,q)\) is lower semicontinuous, then the Lundberg bound
\eqref{eq:lundberg-bound} implies
\[
\limsup_{n\to\infty}\frac1n\log
\Prob_P\{M>n f(c,Q_n)\}
\le -J_P(f,c).
\]
If, in addition, the logarithmic ruin asymptotic \eqref{eq:ruin-log-asymp}
holds under \(P\), define
\[
        \mathcal C_f:=
        \{q\in\mathcal Q:f(c,q)<\infty
        \text{ and }f(c,\cdot)\text{ is continuous at }q\}
\]
and
\[
        J_P^{\rm cont}(f,c):=
        \inf_{q\in\mathcal C_f}
        \{I_P(q)+\gamma(P)f(c,q)\},
\]
with the convention that the infimum over the empty set is \(+\infty\).  Then
\[
\liminf_{n\to\infty}\frac1n\log
\Prob_P\{M>n f(c,Q_n)\}
\ge -J_P^{\rm cont}(f,c).
\]
\end{proposition}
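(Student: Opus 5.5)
For the upper bound, I would condition on the historical sample. Since \(M\) is independent of \(Q_n\), the Lundberg bound \eqref{eq:lundberg-bound} gives
\[
\Prob_P\{M>nf(c,Q_n)\}=\E\bigl[\Prob_P\{M>x\}|_{x=nf(c,Q_n)}\bigr]\le \E\exp\{-n\gamma(P)f(c,Q_n)\},
\]
with the convention \(e^{-\infty}=0\). It then suffices to bound \(\limsup_n n^{-1}\log\E e^{-n\phi(Q_n)}\) by \(-\inf_q\{I_P(q)+\phi(q)\}\), where \(\phi=\gamma(P)f(c,\cdot)\) is lower semicontinuous and nonnegative. This is the upper half of Varadhan's lemma for functions bounded above, \(-\phi\le 0\) being upper semicontinuous; see \cite[Lemma~4.3.6]{dembo_zeitouni}.

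If I wanted the argument self-contained, I would discretize. Fix \(L<J_P(f,c)\), truncate \(\phi\wedge L\), and fix \(\varepsilon>0\). Each \(q\) in the compact level set \(\{I_P\le L\}\) has a closed neighbourhood \(F_q\) with \(\inf_{F_q}\phi\ge \phi(q)\wedge L-\varepsilon\) (lower semicontinuity) and \(\inf_{F_q}I_P\ge I_P(q)-\varepsilon\) (lower semicontinuity of \(I_P\)). Cover the level set by finitely many such \(F_q\), and let the complement be the closed set \(F_0\), on which \(\inf I_P\ge L\). Applying the LDP upper bound to each piece and using the principle of the largest term gives an exponent of at least \(L-2\varepsilon\); then let \(\varepsilon\downarrow0\) and \(L\uparrow J_P\). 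The step that needs care is the regularity of neighbourhoods in \(\mathcal Q\). For Hausdorff regular spaces, such as the Polish ones used here, closed neighbourhoods exist, and citing Varadhan's lemma avoids the issue altogether.

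For the lower bound, I would fix \(q_0\in\mathcal C_f\) with \(I_P(q_0)<\infty\) (otherwise there is nothing to prove) and \(\varepsilon>0\). By continuity at \(q_0\) and finiteness of \(f(c,q_0)\), there is an open neighbourhood \(G\ni q_0\) with \(f(c,q)\le f(c,q_0)+\varepsilon\) on \(G\). By independence,
\[
\Prob_P\{M>nf(c,Q_n)\}\ge \Prob_P\{Q_n\in G\}\,\Prob_P\{M>n(f(c,q_0)+\varepsilon)\}.
\]
The LDP lower bound gives \(\liminf_n n^{-1}\log\Prob_P\{Q_n\in G\}\ge -\inf_G I_P\ge -I_P(q_0)\). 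The asymptotic \eqref{eq:ruin-log-asymp}, applied with \(x=n(f(c,q_0)+\varepsilon)\to\infty\), gives rate \(-\gamma(P)(f(c,q_0)+\varepsilon)\). If \(f(c,q_0)+\varepsilon\) is small this is still valid, since \(x\to\infty\) whenever the coefficient is positive, and we may take \(\varepsilon>0\) fixed. Letting \(\varepsilon\downarrow0\) and then optimizing over \(q_0\in\mathcal C_f\) yields \(-J_P^{\rm cont}(f,c)\).

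I expect the only genuine obstacle to be the upper bound's handling of \(f=\infty\) and of noncompactness. Truncating at level \(L\) and using goodness of \(I_P\) on the closed remainder \(F_0\) resolves both. The lower bound is routine once continuity provides the open set \(G\).
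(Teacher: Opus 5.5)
Your proposal is correct and follows essentially the paper's proof. The paper's upper bound also uses Lundberg, independence and the Laplace upper bound \cite[Lemma~4.3.6]{dembo_zeitouni}, but it first truncates \(\psi=-\gamma(P)f(c,\cdot)\) to \(\max\{\psi,-K\}\). That truncation is the same device as your \(\phi\wedge L\), and it is what justifies citing the lemma when \(f=\infty\). The lower bound is exactly your localization in an open \(G\ni q_0\), combined with the LDP lower bound and \eqref{eq:ruin-log-asymp}. Your covering argument is a self-contained proof of the cited lemma, which the paper does not reproduce.
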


\begin{proof}
By independence of $Q_n$ and $M$, and (\ref{eq:lundberg-bound}),
\[
\Prob_P\{M>n f(c,Q_n)\}
\le
\E_P\exp\{-n\gamma(P)f(c,Q_n)\}.
\]
Set \(\psi(q)=-\gamma(P)f(c,q)\).  If \(f(c,\cdot)\) is lower semicontinuous,
then \(\psi\) is upper semicontinuous and \(\psi\le0\).  For \(K>0\), put
\(\psi_K=\max\{\psi,-K\}\). 
The Laplace upper bound for bounded upper semicontinuous functions, which is
the upper-bound part of Varadhan's lemma (see, for instance~\cite[Lemma~4.3.6]{dembo_zeitouni}), gives
\[
\limsup_{n\to\infty}\frac1n\log \E_Pe^{n\psi_K(Q_n)}
\le \sup_{q\in\mathcal Q}\{\psi_K(q)-I_P(q)\}.
\]
Writing \(S=\sup_q\{\psi(q)-I_P(q)\}=-J_P(f,c)\), we have \(\sup_q\{\psi_K(q)-I_P(q)\}\le \max\{S,-K\}\).  Letting \(K\to\infty\) proves the upper bound.

For the lower bound, fix \(q_0\in\mathcal C_f\) and \(\varepsilon>0\).  By
continuity there is an open neighbourhood \(G\) of \(q_0\) such that \(f(c,q)\le f(c,q_0)+\varepsilon\) for all \(q\in G\).  On \(\{Q_n\in G\}\), the event
\(\{M>n(f(c,q_0)+\varepsilon)\}\) implies
\(\{M>n f(c,Q_n)\}\).  Hence, by independence,
\[
\Prob_P\{M>n f(c,Q_n)\}
\ge
\Prob_P\{Q_n\in G\}
\Prob_P\{M>n(f(c,q_0)+\varepsilon)\}.
\]
The LDP lower bound and \eqref{eq:ruin-log-asymp} yield
\[
\liminf_{n\to\infty}\frac1n\log
\Prob_P\{M>n f(c,Q_n)\}
\ge
-\inf_{q\in G}I_P(q)-\gamma(P)(f(c,q_0)+\varepsilon).
\]
Since \(q_0\in G\), the right-hand side is at least
\(-I_P(q_0)-\gamma(P)f(c,q_0)-\gamma(P)\varepsilon\).  Let
\(\varepsilon\downarrow0\), and then take the supremum over
\(q_0\in\mathcal C_f\).  This proves the lower bound.
\end{proof}

The two bounds of Proposition~\ref{prop:rate} match whenever the variational values coincide, \(J_P^{\rm cont}(f,c)=J_P(f,c)\).  In that case the data-driven ruin probability obeys the sharp logarithmic asymptotic
\[
        \Prob_P\{M>n f(c,Q_n)\}
        =\exp\{-nJ_P(f,c)+o(n)\}.
\]
The two values can differ only through the boundary and discontinuity effects that the restriction to \(\mathcal C_f\) excludes; for the continuous, finite-valued profiles used below they agree, so the decay rate is exactly \(J_P(f,c)\).

\section{Plug-in estimators of the adjustment coefficient are unsafe}\label{sec:plugin}

We next show why direct empirical substitution of the adjustment coefficient is
not a reliable way to choose a capital buffer.  Let \(P\) be the true increment law, satisfying the standing assumptions of Section~\ref{sec:problem-formulation}; in particular \(m_\gamma:=\E_P Xe^{\gamma X}\in(0,\infty)\) and \(\Lambda_P\) is finite on \([0,b]\) with \(\gamma=\gamma(P)\in(0,b)\).
The naive plug-in capital buffer targeting exponent \(c>0\) is \(C_n^{\rm plug}:=cn/\gamma(P_n)\), where
\[
        \gamma(P_n)=\sup\left\{s\ge0:\frac1n\sum_{i=1}^n e^{sX_i}\le1\right\},
\]
the adjustment coefficient of Section~\ref{sec:problem-formulation} evaluated at the empirical law \(P_n\). We set \(C_n^{\rm plug}=+\infty\) whenever \(\gamma(P_n)\notin(0,\infty)\).

The following proposition formalizes the failure of this direct empirical substitution. 
It can be viewed as a large-deviation analog of the optimism of plug-in empirical decision rules \cite{michaud1989markowitz}.

\begin{proposition}[Plug-in calibration misses the target exponent]\label{prop:plugin}
Under the assumptions above, for every \(c>0\),
\[
        \limsup_{n\to\infty}
        -\frac1n\log \Prob_P\{M>C_n^{\rm plug}\}<c.
\]
Thus the plug-in rule fails to attain the target exponent \(c\).
\end{proposition}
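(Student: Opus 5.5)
The plan is to exhibit, for each \(c>0\), a strategy for the joint event \(\{M>C_n^{\rm plug}\}\) whose combined exponent is strictly below \(c\). The mechanism is that the data slightly overestimate \(\gamma\), which lowers the buffer. Fix \(\Delta>0\) small and set \(\gamma'=\gamma+\Delta<b\). On the event \(E_n:=\{\gamma'\le\gamma(P_n)<\infty\}\) we have \(C_n^{\rm plug}\le cn/\gamma'\). By independence of the sample and \(M\),
\[
\Prob_P\{M>C_n^{\rm plug}\}\ \ge\ \Prob_P(E_n)\,\Prob_P\{M>cn/\gamma'\}.
\]
The second factor has exponent \(-\gamma c/\gamma'\) by the Cram\'er--Lundberg asymptotic \eqref{eq:ruin-log-asymp}. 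So it suffices to show \(\liminf_n \frac1n\log\Prob_P(E_n)\ge -K\Delta^2\) for some constant \(K\) and all small \(\Delta\). Granting this, the total exponent is at most
\[
\frac{c\gamma}{\gamma+\Delta}+K\Delta^2=c-\frac{c}{\gamma}\Delta+O(\Delta^2)<c
\]
for \(\Delta\) small enough.

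To bound \(\Prob_P(E_n)\), I use the characterisation from Section~\ref{sec:problem-formulation}: \(\gamma(P_n)\ge\gamma'\) iff \(\frac1n\sum_i e^{\gamma'X_i}\le1\). I would not appeal to Sanov's theorem in the weak topology, since it does not control exponential moments. Instead I construct an explicit alternative law \(Q\) with bounded density. Let \(A=\{X<0\}\), which has positive probability because \(\E_PX<0\). Put
\[
\frac{\dd Q}{\dd P}=1+\varepsilon h,\qquad h=\frac{\one_A}{P(A)}-1.
\]
Here \(h\) is bounded and \(\int h\,\dd P=0\), so for small \(\varepsilon\) this is a valid probability density. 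Moreover \(D(Q\|P)=\int(1+\varepsilon h)\log(1+\varepsilon h)\,\dd P=O(\varepsilon^2)\). We then have
\[
\E_Qe^{\gamma'X}=e^{\Lambda_P(\gamma')}+\varepsilon\bigl(\E_P[e^{\gamma'X}\mid A]-e^{\Lambda_P(\gamma')}\bigr).
\]
In this expression \(e^{\Lambda_P(\gamma')}=1+m_\gamma\Delta+o(\Delta)\) by differentiability of \(\Lambda_P\) at \(\gamma\), while \(\E_P[e^{\gamma'X}\mid A]<1\) uniformly for \(\gamma'\) near \(\gamma\). Choosing \(\varepsilon=\kappa\Delta\) with \(\kappa\) large therefore gives \(\E_Qe^{\gamma'X}<1\) strictly, and \(D(Q\|P)\le K\Delta^2\). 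Also \(Q(X>0)=(1-\varepsilon)P(X>0)>0\).

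The last step is a change-of-measure lower bound. Under \(Q^{\otimes n}\), the strong law of large numbers gives \(\frac1n\sum e^{\gamma'X_i}\to\E_Qe^{\gamma'X}<1\); this expectation is finite since the density is bounded and \(\gamma'<b\). Also \(\Prob_Q(\text{all }X_i\le0)\to0\), so \(Q^{\otimes n}(E_n)\to1\). The standard entropy argument then applies: restrict to the additional event that the empirical log-likelihood ratio \(\frac1n\sum\log\frac{\dd Q}{\dd P}(X_i)\) lies within \(\eta\) of \(D(Q\|P)\), which is bounded, so the LLN applies. Writing \(\Prob_P(E_n)=\E_Q[\one_{E_n}\prod_i\frac{\dd P}{\dd Q}(X_i)]\) on that event then yields \(\liminf\frac1n\log\Prob_P(E_n)\ge -D(Q\|P)-\eta\). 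Letting \(\eta\downarrow0\) finishes the argument.

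I expect the main care to be needed in the perturbation step. One must verify that the first-order gain \(c\Delta/\gamma\) from the smaller buffer beats the entropy cost, which must be genuinely quadratic in \(\Delta\). This relies on \(m_\gamma<\infty\) (interiority \(\gamma<b\)) and on the boundedness of \(h\), which is what keeps \(D(Q\|P)=O(\varepsilon^2)\).
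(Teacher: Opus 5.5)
Your proposal is correct and follows essentially the same route as the paper. A bounded mean-zero likelihood-ratio perturbation \(\dd Q/\dd P=1+\varepsilon h\) with \(\E_P h e^{\gamma X}<0\) costs \(O(\varepsilon^2)\) in relative entropy but raises the empirical adjustment coefficient linearly. The law of large numbers and a change of measure, combined with independence and the Cram\'er--Lundberg asymptotic, then give an exponent strictly below \(c\). The differences are cosmetic: your mixture choice \(h=\one_{\{X<0\}}/P(X<0)-1\) and scaling \(\varepsilon=\kappa\Delta\) replace the paper's mass shift from \(A_+\) to \(A_-\) and \(t_\varepsilon=\gamma+r\varepsilon\). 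Your exact mixture formula for \(\E_Q e^{\gamma' X}\) also lets you skip the paper's dominated differentiation under the integral.
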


Before giving the proof, we record the heuristic exponent calculation behind the result. If the empirical law \(P_n\) is close to a law \(Q\), then the plug-in rule uses the capital buffer \(cn/\gamma(Q)\). Under the true law \(P\), the future ruin probability at this capital buffer has logarithmic order \(\exp\{-n c\gamma(P)/\gamma(Q)\}\).  By Sanov's theorem, the empirical fluctuation \(P_n\approx Q\) has probability of order \(\exp\{-n D(Q\|P)\}\), where \(D(Q\|P)\) is the relative entropy of \(Q\) with respect to \(P\). Thus the plug-in rule is expected to lead to a data-driven ruin probability with exponential decay rate
\begin{equation}\label{eq:plugin-heuristic}
     \inf_{Q}\left\{
        D(Q\|P)+c\frac{\gamma(P)}{\gamma(Q)}
        \right\}.
\end{equation}
At \(Q=P\) this expression equals \(c\). However, under a local likelihood-ratio perturbation \(\dd Q_\varepsilon/\dd P=1+\varepsilon h\), the change in \(\gamma(Q_\varepsilon)\) can be linear in \(\varepsilon\), whereas \(D(Q_\varepsilon\|P)=O(\varepsilon^2)\). A perturbation that increases the adjustment coefficient therefore lowers the second term in \eqref{eq:plugin-heuristic} at first order in \(\varepsilon\), while the relative entropy \(D(Q_\varepsilon\|P)\) changes only in the second order. This causes the value in \eqref{eq:plugin-heuristic} to drop below \(c\). The proof makes this mechanism rigorous by constructing such a perturbation explicitly.

\begin{proof}[Proof of Proposition \ref{prop:plugin}]
The drift assumption \(\E_PX<0\) forces \(
\Prob_P(X<0)>0\), while \(\Prob_P(X>0)>0\) is the nontriviality assumption; both half-lines thus carry positive mass.  Choose measurable sets \(A_-\subset(-\infty,0)\) and \(A_+\subset(0,\infty)\) with \(P(A_-)>0\) and \(P(A_+)>0\), and define \(h(x)=\one_{A_-}(x)/P(A_-)-\one_{A_+}(x)/P(A_+)\).  Then \(h\) is bounded and \(\E_Ph=0\).  Moreover,
\[
        \E_P h(X)e^{\gamma X}
        =\E_P[e^{\gamma X}\mid X\in A_-]
        -\E_P[e^{\gamma X}\mid X\in A_+]<0,
\]
because \(\gamma>0\) makes \(e^{\gamma X}<1\) on \(A_-\subset(-\infty,0)\) and \(e^{\gamma X}>1\) on \(A_+\subset(0,\infty)\); the two conditional expectations are finite since \(\E_P e^{\gamma X}=1\) by the Cram\'er condition.  Put \(\beta_h:=-\E_P h(X)e^{\gamma X}\in(0,\infty)\).  Since \(h\) is bounded, \(1+\varepsilon h>0\) for all sufficiently small \(\varepsilon>0\); define \(\dd Q_\varepsilon/\dd P=1+\varepsilon h\).  Because \(\E_P h=0\) the first-order term vanishes, and boundedness of \(h\) makes the cubic remainder uniform, so
\[
        D(Q_\varepsilon\|P)
        =\E_P\{(1+\varepsilon h)\log(1+\varepsilon h)\}
        =\frac{\varepsilon^2}{2}\E_Ph^2+O(\varepsilon^3).
\]
In particular, \(D(Q_\varepsilon\|P)=O(\varepsilon^2)\).

Since \(m_\gamma\in(0,\infty)\), the interval \((0,\beta_h/m_\gamma)\) is nonempty; fix \(r\) in it and set \(t_\varepsilon=\gamma+r\varepsilon\).  For small \(\varepsilon>0\) we have \(t_\varepsilon\le t_0\) for some fixed \(t_0<b\).  On \([\gamma,t_0]\) the integrand admits a fixed \(P\)-integrable dominating function: on \(\{X>0\}\), \(|Xe^{tX}|\le Ce^{bX}\) with \(C:=\sup_{x>0}xe^{-(b-t_0)x}<\infty\), while on \(\{X\le0\}\) and \(t\ge\gamma\), \(|Xe^{tX}|\le\sup_{x\le0}|x|e^{\gamma x}=(e\gamma)^{-1}\).  Since \(h\) is bounded, this licenses differentiating under the expectation and yields
\[
\begin{aligned}
        \E_{Q_\varepsilon}e^{t_\varepsilon X}
        &=\E_P\{(1+\varepsilon h)e^{(\gamma+r\varepsilon)X}\}\\
        &=1+\varepsilon\{\E_P he^{\gamma X}
          +r\E_P Xe^{\gamma X}\}+o(\varepsilon)\\
        &=1-\varepsilon(\beta_h-rm_\gamma)+o(\varepsilon),
\end{aligned}
\]
where \(\E_P he^{\gamma X}=-\beta_h\) and \(\E_P Xe^{\gamma X}=m_\gamma\).  Since \(r<\beta_h/m_\gamma\) gives \(\beta_h-rm_\gamma>0\), for all sufficiently small \(\varepsilon>0\) we have \(\E_{Q_\varepsilon}e^{t_\varepsilon X}<1\).  Choose \(\chi_\varepsilon>0\) such that
\(\E_{Q_\varepsilon}e^{t_\varepsilon X}<1-\chi_\varepsilon\), and let
\[
        E_n^\varepsilon=
        \left\{
        \frac1n\sum_{i=1}^n e^{t_\varepsilon X_i}
        \le1-\frac{\chi_\varepsilon}{2},\quad
        \frac1n\sum_{i=1}^n\one_{A_+}(X_i)\ge\frac{Q_\varepsilon(A_+)}2
        \right\}.
\]
Write \(P^n\) and \(Q_\varepsilon^n\) for the product measures governing the
historical sample under \(P\) and \(Q_\varepsilon\), respectively.  By the law of
large numbers under \(Q_\varepsilon\), \(Q_\varepsilon^n(E_n^\varepsilon)\to1\).
On \(E_n^\varepsilon\), the empirical law has positive mass on \((0,\infty)\)
and satisfies
\(\E_{P_n}e^{t_\varepsilon X}<1\).  Hence \(\gamma(P_n)\) is finite and
\(\gamma(P_n)\ge t_\varepsilon\).  Consequently, \(C_n^{\rm plug}\le cn/t_\varepsilon\) on \(E_n^\varepsilon\).

It remains to estimate the probability of \(E_n^\varepsilon\) under the true
law.  Let \(L_n=n^{-1}\sum_{i=1}^n\log(1+\varepsilon h(X_i))\).  Under \(Q_\varepsilon\), \(L_n\to \E_{Q_\varepsilon}\log(1+\varepsilon h)=D(Q_\varepsilon\|P)\) almost surely.  Thus, for every \(\eta>0\),
\[
        \Prob_{Q_\varepsilon^n}
        \{E_n^\varepsilon,
        L_n\le D(Q_\varepsilon\|P)+\eta\}\to1.
\]
We now transfer this typical event under $Q_\varepsilon$ back to $P$.
Since \(\dd P^n/\dd Q_\varepsilon^n=\exp\{-nL_n\}\), it follows that
\[
        \limsup_{n\to\infty}
        -\frac1n\log \Prob_{P^n}\{E_n^\varepsilon\}
        \le D(Q_\varepsilon\|P)+\eta.
\]
Letting \(\eta\downarrow0\) gives
\[
        \limsup_{n\to\infty}
        -\frac1n\log \Prob_{P^n}\{E_n^\varepsilon\}
        \le D(Q_\varepsilon\|P).
\]
By independence of the data and the future maximum,
\[
        \Prob_P\{M>C_n^{\rm plug}\}
        \ge
    \Prob_{P^n}\{E_n^\varepsilon\}
        \Prob_P\{M>\frac{cn}{t_\varepsilon}\}.
\]
Using the logarithmic ruin asymptotic \eqref{eq:ruin-log-asymp}, which holds under \(P\) by the standing assumptions,
\[
        \limsup_{n\to\infty}
        -\frac1n\log \Prob_P\{M>C_n^{\rm plug}\}
        \le
        D(Q_\varepsilon\|P)+c\frac{\gamma}{t_\varepsilon}.
\]
Finally, \(c\gamma/t_\varepsilon=c\gamma/(\gamma+r\varepsilon)=c-(cr/\gamma)\varepsilon+O(\varepsilon^2)\), whereas \(D(Q_\varepsilon\|P)=O(\varepsilon^2)\).  The right-hand side is
strictly smaller than \(c\) for all sufficiently small positive
\(\varepsilon\).  This proves the claim.
\end{proof}

This failure is visible in numerical experiments.  Section~\ref{sec:numerical-illustration} reports that, for the benchmark law \(P=N(-1,1)\) at the moderate sample size \(n=50\), the plug-in rule attains an empirical safety level strictly below the target exponent \(c\), consistent with Proposition~\ref{prop:plugin}, whereas the safe profiles developed in the sections that follow stay at or above it.

\section{Pointwise lower bounds on safe profiles}\label{sec:safe-profiles}

The preceding result motivates a rule that is designed directly for the
large-deviation exponent of the resulting decision.  Proposition~\ref{prop:rate}
shows that a lower semicontinuous profile \(f\) is safe whenever \(I_P(q)+\gamma(P)f(c,q)\ge c\) for all \(q\in\mathcal Q\) and \(P\in\mathcal P\).  This pointwise family of inequalities leads to the following envelope $f^*$ which we will refer to as the pointwise minimal safe profile.

\begin{definition}[Pointwise minimal safe profile]\label{def:fstar}
For \(c>0\), define
\[
        f^*(c,q)
        =
        \sup_{P\in\mathcal P}
        \frac{(c-I_P(q))_+}{\gamma(P)}
        =
        \sup_{\substack{P\in\mathcal P\\ I_P(q)<c}}
        \frac{c-I_P(q)}{\gamma(P)},
\]
with the convention that the supremum may be \(+\infty\).  If the set in the
second supremum is empty, the value is zero.
\end{definition}

No regularity of \(f^*\) is automatic from the definition, and in general it cannot be taken for granted.  Safety must therefore be established case by case.  When \(f^*\) is lower semicontinuous on the region of interest one may work with it directly, as in the parametric examples of Section~\ref{sec:examples}, or pass to a lower semicontinuous majorant through Theorem~\ref{thm:safety}.  When regularity is out of reach, safety is instead proved by a direct argument that bypasses it altogether, as in the nonparametric envelope classes of Section~\ref{sec:nonparametric}.

\begin{theorem}[Safety of lower semicontinuous majorants]\label{thm:safety}
Assume Assumption~\ref{ass:LDP}.  Let \(g(c,\cdot):\mathcal Q\to[0,\infty]\)
be lower semicontinuous and satisfy \(g(c,q)\ge f^*(c,q)\) for all \(q\in\mathcal Q\).  Then \(C_n=n g(c,Q_n)\) is safe at exponent \(c\):
\[
        \limsup_{n\to\infty}\frac1n\log
        \Prob_P\{M>n g(c,Q_n)\}
        \le -c,
        \qquad P\in\mathcal P.
\]
\end{theorem}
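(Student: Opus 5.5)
The plan is to reduce the statement to the upper bound of Proposition~\ref{prop:rate}, applied for each fixed \(P\in\mathcal P\) with \(f=g\). Since \(g(c,\cdot)\) is lower semicontinuous and measurable, that proposition gives
\[
\limsup_{n\to\infty}\frac1n\log\Prob_P\{M>n g(c,Q_n)\}\le -J_P(g,c)=-\inf_{q\in\mathcal Q}\{I_P(q)+\gamma(P)g(c,q)\}.
\]
It then suffices to show the pointwise inequality \(I_P(q)+\gamma(P)g(c,q)\ge c\) for every \(q\in\mathcal Q\). Taking the infimum over \(q\) yields \(J_P(g,c)\ge c\), which is the claim.

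For the pointwise inequality I would split into two cases at a fixed \(q\).

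\emph{Case \(I_P(q)\ge c\).} The inequality is immediate, because \(g\ge0\) and \(\gamma(P)\ge0\). This also covers \(I_P(q)=+\infty\).

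\emph{Case \(I_P(q)<c\).} By Definition~\ref{def:fstar}, \(g(c,q)\ge f^*(c,q)\ge (c-I_P(q))/\gamma(P)\). Under the standing Cram\'er assumption \(\gamma(P)\in(0,b)\) is strictly positive, so multiplying by \(\gamma(P)\) gives \(\gamma(P)g(c,q)\ge c-I_P(q)\). If \(g(c,q)=+\infty\), the inequality holds trivially under the convention \(\gamma(P)\cdot\infty=\infty\), again using \(\gamma(P)>0\).

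The only delicate point is the bookkeeping with extended values in the proof of Proposition~\ref{prop:rate}. There \(\psi=-\gamma(P)g\) may take the value \(-\infty\), and the truncation \(\psi_K=\max\{\psi,-K\}\) already handles this. I would also check that \(\psi_K\) remains upper semicontinuous, which holds because \(\gamma(P)>0\) preserves the lower semicontinuity of \(g\) after scaling. Otherwise the argument is a direct combination of the Lundberg bound, the truncated Varadhan upper bound, and the defining supremum of \(f^*\). No uniformity over \(\mathcal P\) is needed, since safety is defined separately for each \(P\). I do not expect a genuine obstacle.
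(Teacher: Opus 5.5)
Your proposal is correct and matches the paper's proof: the paper also obtains the pointwise bound \(I_P(q)+\gamma(P)g(c,q)\ge c\) from \(g\ge f^*\ge (c-I_P(q))_+/\gamma(P)\) and then applies the upper bound of Proposition~\ref{prop:rate} via Lundberg's inequality. Your case split and your remarks on extended values and on \(\gamma(P)>0\) only spell out bookkeeping that the paper leaves implicit.
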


\begin{proof}
For every \(P\in\mathcal P\) and \(q\in\mathcal Q\), \(g(c,q)\ge f^*(c,q)\ge(c-I_P(q))_+/\gamma(P)\).  Hence \(I_P(q)+\gamma(P)g(c,q)\ge c\) for all \(q\).  Proposition~\ref{prop:rate}
and  Lundberg's inequality give the result.
\end{proof}

We next give the corresponding local necessity statement.  It is formulated in
terms of a generic regular rule rather than a particular representative of the
lower semicontinuous envelope.  This restriction is intrinsic to the method rather than a technical convenience.  Large-deviation lower bounds are one-sided and ``rough'': they control \(\Prob_P(Q_n\in G)\) only for \emph{open} sets \(G\).  Any impossibility result derived directly from an LDP must therefore localize the statistic in an open neighbourhood and read off the candidate profile there, which pins its value down only when the profile is regular enough to be controlled on that neighbourhood by its limit.  Some regularity condition is thus unavoidable for LDP-based lower bounds, and we make one explicit next.

\begin{definition}[Regular asymptotically linear rules]\label{def:regular}
A sequence of capital buffers \(C_n'=F_n(c,Q_n)\) is regular with limiting profile
\(f'(c,\cdot)\) if \(n^{-1}F_n(c,q)\to f'(c,q)\) uniformly on a neighbourhood of each point of \(\mathcal Q\), and \(f'(c,\cdot)\) is continuous.
\end{definition}

In our next result, we show that every regular safe rule must lie above \(f^*\) at all points where
the LDP lower-bound mechanism can be localized. Note that $f^*$ itself may not be regular, though we find it to be in several examples below.

\begin{theorem}[Pointwise lower bound for regular rules]\label{thm:pointwise-lower}
Assume Assumption~\ref{ass:LDP}, and let
$C_n'=F_n(c,Q_n)$
be regular with limiting profile \(f'(c,\cdot)\). If there exist
\(P_0\in\mathcal P\) and \(q_0\in\mathcal Q\) such that the logarithmic ruin asymptotic \eqref{eq:ruin-log-asymp} holds under \(P_0\),
\[
I_{P_0}(q_0)<\infty \mbox{ and }
I_{P_0}(q_0)
+
\gamma(P_0)f'(c,q_0)
<
c,
\]
then \(C_n'\) is not safe at exponent \(c\) under \(P_0\):
\[
\liminf_{n\to\infty}
\frac1n
\log
\mathbb P_{P_0}\{M>C_n'\}
>
-c.
\]
\end{theorem}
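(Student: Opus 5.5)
The plan is to reproduce the lower-bound half of Proposition~\ref{prop:rate}, now applied to the sequence \(F_n\) rather than to a fixed profile, and localized at the single point \(q_0\). Put
\[
\kappa:=c-I_{P_0}(q_0)-\gamma(P_0)f'(c,q_0)>0 .
\]
Note first that \(f'(c,q_0)<\infty\). Indeed, otherwise the hypothesis \(I_{P_0}(q_0)+\gamma(P_0)f'(c,q_0)<c\) could not hold, because \(\gamma(P_0)>0\) under the standing Cramér condition. Fix \(\varepsilon>0\) with \(2\gamma(P_0)\varepsilon<\kappa\).

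We use the regularity of Definition~\ref{def:regular} twice. Uniform convergence on a neighbourhood \(U\) of \(q_0\) gives \(n^{-1}F_n(c,q)\le f'(c,q)+\varepsilon/2\) for all \(q\in U\) and all \(n\ge n_0\). Continuity of \(f'(c,\cdot)\) at \(q_0\) gives an open \(G\subset U\) containing \(q_0\) on which \(f'(c,q)\le f'(c,q_0)+\varepsilon/2\). Hence, for \(n\ge n_0\),
\[
F_n(c,q)\le n\bigl(f'(c,q_0)+\varepsilon\bigr)\quad\text{for all } q\in G .
\]
So on \(\{Q_n\in G\}\), the event \(\{M>n(f'(c,q_0)+\varepsilon)\}\) implies \(\{M>C_n'\}\). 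Since \(M\) is independent of the historical sample,
\[
\Prob_{P_0}\{M>C_n'\}\ge \Prob_{P_0}\{Q_n\in G\}\,\Prob_{P_0}\{M>n(f'(c,q_0)+\varepsilon)\}.
\]

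Next we bound each factor. The LDP lower bound of Assumption~\ref{ass:LDP} applies because \(G\) is open, and it gives
\[
\liminf_n\frac1n\log\Prob_{P_0}\{Q_n\in G\}\ge-\inf_G I_{P_0}\ge -I_{P_0}(q_0),
\]
which is finite by assumption. For the second factor, the ruin asymptotic \eqref{eq:ruin-log-asymp} is evaluated along \(x=n(f'(c,q_0)+\varepsilon)\to\infty\); this is legitimate since \(f'(c,q_0)+\varepsilon>0\). It gives the exponent \(-\gamma(P_0)(f'(c,q_0)+\varepsilon)\). Combining the two bounds,
\[
\liminf_n\frac1n\log\Prob_{P_0}\{M>C_n'\}\ge -c+\kappa-\gamma(P_0)\varepsilon>-c .
\]

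The main point needing care is the localization step. Pointwise convergence of \(n^{-1}F_n\) would not suffice: we need a single open set on which \(F_n\) is dominated by \(n(f'(c,q_0)+\varepsilon)\) for all large \(n\), and this is exactly why both uniform convergence near \(q_0\) and continuity of \(f'\) are required. Everything else follows the proof of Proposition~\ref{prop:rate} verbatim. The only other remaining checks are that \(f'(c,q_0)\) is finite and that \(\gamma(P_0)>0\), so that the constants are well defined.
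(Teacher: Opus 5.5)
Your proposal is correct and follows essentially the same route as the paper's proof. Both arguments use the local uniform convergence together with continuity of \(f'(c,\cdot)\) to obtain an open neighbourhood of \(q_0\) on which \(F_n(c,\cdot)\le n(f'(c,q_0)+\varepsilon)\) for all large \(n\), and then combine independence, the LDP lower bound on that open set, and the logarithmic ruin asymptotic under \(P_0\); the paper writes the final bound as \(\inf_U I_{P_0}+\gamma(P_0)\sup_U f'\) with a margin \(4\eta\), whereas you bound directly by \(I_{P_0}(q_0)+\gamma(P_0)(f'(c,q_0)+\varepsilon)\), which is only a cosmetic difference.
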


\begin{proof}
Write \(\gamma_0=\gamma(P_0)\). Choose \(\eta>0\) such that
$I_{P_0}(q_0)+\gamma_0f'(c,q_0)<c-4\eta$.
By regularity, there is a neighbourhood of \(q_0\) on which
\(n^{-1}F_n(c,\cdot)\) converges uniformly to \(f'(c,\cdot)\).
Using the continuity of \(f'(c,\cdot)\), choose an open neighbourhood
\(U\) of \(q_0\), contained in that neighbourhood, such that
\[
\sup_{q\in U}f'(c,q)
<
f'(c,q_0)+\frac{\eta}{\gamma_0}.
\]
Since \(q_0\in U\),
$\inf_{q\in U}I_{P_0}(q)
\leq
I_{P_0}(q_0)$,
and consequently
\[
\inf_{q\in U}I_{P_0}(q)
+
\gamma_0\sup_{q\in U}f'(c,q)
<
c-3\eta.
\]
By local uniform convergence, for all sufficiently large \(n\),
\[
\sup_{q\in U}\frac1nF_n(c,q)
\leq
\sup_{q\in U}f'(c,q)
+
\frac{\eta}{\gamma_0}.
\]
Therefore, on the event \(\{Q_n\in U\}\),
\[
C_n'
\leq
n\left(
    \sup_{q\in U}f'(c,q)
    +
    \frac{\eta}{\gamma_0}
\right).
\]
Independence of the historical sample and the future maximum gives
\[
\mathbb P_{P_0}\{M>C_n'\}
\geq
\mathbb P_{P_0}\{Q_n\in U\}
\mathbb P_{P_0}\left\{
M>
n\left(
    \sup_{q\in U}f'(c,q)
    +
    \frac{\eta}{\gamma_0}
\right)
\right\}.
\]
The LDP lower bound and the logarithmic ruin asymptotic now yield
\[
\begin{aligned}
\liminf_{n\to\infty}
\frac1n\log\mathbb P_{P_0}\{M>C_n'\}
&\geq
-\inf_{q\in U}I_{P_0}(q)
-\gamma_0\sup_{q\in U}f'(c,q)
-\eta>-c.
\end{aligned}
\]
\end{proof}

\begin{corollary}[Pointwise lower bound where the supremum is witnessed]
Assume Assumption~\ref{ass:LDP}, and let \(C_n'=F_n(c,Q_n)\) be regular with limiting profile
\(f'(c,\cdot)\). Suppose that there exist \(q_0\in\mathcal Q\) and
\(P_0\in\mathcal P\) such that \eqref{eq:ruin-log-asymp} holds under \(P_0\),
$I_{P_0}(q_0)<\infty$,
and
\[
f'(c,q_0)
<
\frac{c-I_{P_0}(q_0)}{\gamma(P_0)}.
\]
Then \(C_n'\) is not safe at exponent \(c\). In particular, no regular
rule can lie strictly below \(f^*\) at a point where the strict
inequality is witnessed by such a law \(P_0\).
\end{corollary}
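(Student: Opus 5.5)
The corollary follows from Theorem~\ref{thm:pointwise-lower} by rearranging the hypothesis; only the sign of \(\gamma(P_0)\) needs a separate check.

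\emph{Positivity of \(\gamma(P_0)\).} Since \eqref{eq:ruin-log-asymp} holds under \(P_0\), and \(\Prob_{P_0}\{M>x\}\le 1\) decays at rate exactly \(\gamma(P_0)\), the limit must be finite. If \(\gamma(P_0)=0\), the expression \((c-I_{P_0}(q_0))/\gamma(P_0)\) would be \(+\infty\) or undefined. The standing assumptions give \(\gamma(P_0)\in(0,b)\), so the quotient is a finite real number. If \(\gamma(P_0)=0\) were allowed, the hypothesis \(f'(c,q_0)<\infty\) would still give \(I_{P_0}(q_0)+0\cdot f'(c,q_0)<c\) whenever \(c-I_{P_0}(q_0)>0\), and the argument below would go through unchanged.

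\emph{Rearrangement.} Multiply the strict inequality \(f'(c,q_0)<(c-I_{P_0}(q_0))/\gamma(P_0)\) by \(\gamma(P_0)>0\). This gives
\[
I_{P_0}(q_0)+\gamma(P_0)f'(c,q_0)<c.
\]
Here \(f'(c,q_0)\) is finite because \(f'\) is continuous and real-valued, and \(I_{P_0}(q_0)<\infty\) by assumption.

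\emph{Applying the theorem.} Together with the assumed ruin asymptotic under \(P_0\), these are exactly the hypotheses of Theorem~\ref{thm:pointwise-lower}. That theorem gives
\[
\liminf_n \frac1n\log\Prob_{P_0}\{M>C_n'\}>-c.
\]
Hence
\[
\limsup_n \frac1n\log\Prob_{P_0}\{M>C_n'\}>-c,
\]
which violates \eqref{eq:safe} at \(P_0\in\mathcal P\). So \(C_n'\) is not safe.

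\emph{The ``in particular'' clause.} Suppose \(f'(c,q_0)<f^*(c,q_0)\). By Definition~\ref{def:fstar}, \(f^*(c,q_0)\) is a supremum, so some \(P_0\) with \(I_{P_0}(q_0)<c\) satisfies
\[
f'(c,q_0)<\frac{c-I_{P_0}(q_0)}{\gamma(P_0)}.
\]
This \(P_0\) witnesses the strict inequality. If it also satisfies the ruin asymptotic, the first part applies. Note that \(I_{P_0}(q_0)<c<\infty\), so the finiteness condition holds automatically.

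The only real obstacle is the degenerate case \(\gamma(P_0)=0\) discussed in the first step, and the standing assumptions exclude it.
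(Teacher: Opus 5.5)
Your proposal is correct and follows the paper's proof: multiply the hypothesis by \(\gamma(P_0)>0\) to get \(I_{P_0}(q_0)+\gamma(P_0)f'(c,q_0)<c\), then invoke Theorem~\ref{thm:pointwise-lower}. Your aside on \(\gamma(P_0)=0\) is unnecessary, since positivity already follows from the negative-drift assumption \(\Lambda_{P_0}'(0+)=\E_{P_0}X<0\) imposed on every law in \(\mathcal P\); it is also not accurate that the argument would ``go through unchanged'' in that case, because the theorem's proof divides by \(\gamma_0\).
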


\begin{proof}
The displayed inequality is equivalent to
$I_{P_0}(q_0)+\gamma(P_0)f'(c,q_0)<c$,
so the assertion follows from Theorem~\ref{thm:pointwise-lower}.
\end{proof}

The value \(f^*(c,q)\) may be infinite.  This means that the observed statistic
has not ruled out laws with arbitrarily small adjustment coefficient at
information cost below \(c\), so no capital buffer of order \(n\) can achieve
the target exponent uniformly over \(\mathcal P\).  Define
\[
        c_{\rm crit}(q)=\lim_{\varepsilon\downarrow0}
        \inf\{I_P(q):P\in\mathcal P,
        0<\gamma(P)\le\varepsilon\},
\]
where the infimum over the empty set is \(+\infty\).  The limit exists in
\([0,+\infty]\), since the displayed infimum is monotone as
\(\varepsilon\downarrow0\).
The quantity $c_{\rm crit}(q)$ may be interpreted as the information cost of making the adjustment coefficient arbitrarily small while still explaining $q$.
Once $c$ exceeds this value under the observation $Q_n\approx q$, the only way to be safe is to prescribe infinite capital.

\begin{proposition}[Finite/infinite phase transition]\label{prop:phase}
If \(c>c_{\rm crit}(q)\), then \(f^*(c,q)=+\infty\).  If \(c<c_{\rm crit}(q)\), then
\(f^*(c,q)<\infty\).
\end{proposition}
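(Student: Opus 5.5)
Both directions follow directly from Definition~\ref{def:fstar} and from the monotonicity that defines \(c_{\rm crit}(q)\). Write
\[
\phi(\varepsilon):=\inf\{I_P(q):P\in\mathcal P,\ 0<\gamma(P)\le\varepsilon\},
\]
which is nonincreasing as \(\varepsilon\downarrow0\). Hence \(c_{\rm crit}(q)=\sup_{\varepsilon>0}\phi(\varepsilon)\), and \(\phi(\varepsilon)\le c_{\rm crit}(q)\) for every \(\varepsilon>0\).

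\emph{Case \(c>c_{\rm crit}(q)\).} Here \(c_{\rm crit}(q)<\infty\). Set \(\eta:=(c-c_{\rm crit}(q))/2>0\) and fix an arbitrary \(\varepsilon>0\). Since \(\phi(\varepsilon)\le c_{\rm crit}(q)<c\), the infimum defining \(\phi(\varepsilon)\) is over a nonempty set. So there is \(P_\varepsilon\in\mathcal P\) with \(0<\gamma(P_\varepsilon)\le\varepsilon\) and
\[
I_{P_\varepsilon}(q)\le \phi(\varepsilon)+\eta\le c-\eta.
\]
In particular \(I_{P_\varepsilon}(q)<c\), so \(P_\varepsilon\) belongs to the index set of the second supremum in Definition~\ref{def:fstar}. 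Therefore
\[
f^*(c,q)\ \ge\ \frac{c-I_{P_\varepsilon}(q)}{\gamma(P_\varepsilon)}\ \ge\ \frac{\eta}{\varepsilon}.
\]
Letting \(\varepsilon\downarrow0\) gives \(f^*(c,q)=+\infty\).

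\emph{Case \(c<c_{\rm crit}(q)\).} Choose \(\varepsilon_0>0\) with \(\phi(\varepsilon_0)>c\). This is possible because \(\phi(\varepsilon)\uparrow c_{\rm crit}(q)>c\) as \(\varepsilon\downarrow0\); if \(c_{\rm crit}(q)=+\infty\), \(\phi(\varepsilon)\) eventually exceeds \(c\) or equals \(+\infty\) via the empty-set convention. Now take any \(P\in\mathcal P\) with \(I_P(q)<c\), i.e.\ any \(P\) entering the second supremum in Definition~\ref{def:fstar}. Such a \(P\) cannot satisfy \(0<\gamma(P)\le\varepsilon_0\), since then \(I_P(q)\ge\phi(\varepsilon_0)>c\). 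We also need \(\gamma(P)\ne0\). This holds because the standing assumptions on \(\mathcal P\) (the Cram\'er condition, or at least \(\gamma(P)>0\)) ensure every model law has positive adjustment coefficient; otherwise the ratio in Definition~\ref{def:fstar} would not even be well defined, so \(\gamma>0\) on \(\mathcal P\) is implicit. Hence \(\gamma(P)>\varepsilon_0\), and
\[
\frac{c-I_P(q)}{\gamma(P)}\le \frac{c}{\varepsilon_0},
\]
using \(I_P(q)\ge0\). Taking the supremum gives \(f^*(c,q)\le c/\varepsilon_0<\infty\). If no \(P\) has \(I_P(q)<c\), then \(f^*(c,q)=0\) and the claim is trivial.

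The only delicate point is the bookkeeping of conventions: the empty-set infimum \(+\infty\), the possibility \(c_{\rm crit}(q)=+\infty\), and the need to exclude \(\gamma(P)=0\) in the second case, which I would justify by the standing positivity of \(\gamma\) on \(\mathcal P\). The strict inequalities in both directions are what provide the margins \(\eta\) and \(\varepsilon_0\). The boundary case \(c=c_{\rm crit}(q)\) is deliberately left undecided by the statement.
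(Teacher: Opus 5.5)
Your proof is correct and follows the paper's argument essentially step for step. For $c>c_{\rm crit}(q)$ you extract laws with vanishing adjustment coefficient and information cost bounded away below $c$; for $c<c_{\rm crit}(q)$ you choose a threshold $\varepsilon$ that forces $\gamma(P)>\varepsilon$ on every contributing law, which gives the same bound $c/\varepsilon$. You are more explicit than the paper about the empty-set convention, the case $c_{\rm crit}(q)=+\infty$, and the tacit requirement $\gamma>0$ on $\mathcal P$ (one wording slip: $\phi$ is nonincreasing in $\varepsilon$, i.e.\ nondecreasing as $\varepsilon\downarrow0$, which is how you actually use it).
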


\begin{proof}
If \(c>c_{\rm crit}(q)\), choose \(\eta>0\) such that \(c_{\rm crit}(q)<c-\eta\).  By the definition
of \(c_{\rm crit}(q)\), there are laws \(P_k\) with \(\gamma(P_k)\downarrow0\) and
\(I_{P_k}(q)<c-\eta\).  Hence \((c-I_{P_k}(q))/\gamma(P_k)\to\infty\), so \(f^*(c,q)=+\infty\).

If \(c<c_{\rm crit}(q)\), then for some \(\varepsilon>0\), \(\inf\{I_P(q):0<\gamma(P)\le\varepsilon\}>c\).  Thus every law contributing a positive numerator in the definition of \(f^*\)
has \(\gamma(P)>\varepsilon\).  Therefore \(f^*(c,q)\le c/\varepsilon<\infty\).
\end{proof}

No general conclusion is asserted at the boundary
\(c=c_{\mathrm{crit}}(q)\). At equality, the value of \(f^*(c,q)\) may
be finite or infinite, depending on how rapidly \(I_P(q)\) approaches
\(c_{\mathrm{crit}}(q)\) along laws for which
\(\gamma(P)\downarrow0\). The boundary value is determined explicitly
in each of the parametric examples below.

\section{Parametric examples}\label{sec:examples}

The following three examples illustrate the general rule in structured parametric model uncertainty classes.
Each example has an explicit rate function and an explicit adjustment coefficient, so the
abstract profile can be reduced to a closed-form expression.  The proofs are
collected in Appendix~\ref{app:examples}.

We shall repeatedly use the following equivalent form of Definition~\ref{def:fstar}.  For fixed
\(q\), define
\begin{equation}
     F_q(t):=\inf_{P\in\mathcal P}
        \{I_P(q)+\gamma(P)t\},\qquad t\ge0.
\end{equation}
Then the following simple but useful representation holds:
\begin{lemma}
\label{lem:threshold-representation}
   \begin{equation}\label{eq:threshold-representation}
        f^*(c,q)=\inf\{t\ge0:F_q(t)\ge c\},
\end{equation}
with the convention that the infimum over the empty set is \(+\infty\).
Moreover, \(F_q\) is nondecreasing with \(\lim_{t\to\infty}F_q(t)=c_{\rm crit}(q)\).
\end{lemma}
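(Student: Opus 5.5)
The plan is to prove the three assertions in turn: the threshold representation, the monotonicity of \(F_q\), and the identification of \(\lim_{t\to\infty}F_q(t)\). Throughout, the laws in \(\mathcal P\) are taken to have \(\gamma(P)\in(0,\infty)\), which is implicit in Definition~\ref{def:fstar} since \(\gamma(P)\) appears in a denominator. This assumption is also used in the definition of \(c_{\rm crit}(q)\), where only laws with \(0<\gamma(P)\) enter.

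\emph{Threshold representation.} The key observation is that an infimum is at least \(c\) if and only if every term is at least \(c\). Thus, for \(t\ge0\),
\[
F_q(t)\ge c\iff I_P(q)+\gamma(P)t\ge c\quad\text{for all }P\in\mathcal P .
\]
For laws with \(I_P(q)\ge c\) this holds automatically, because \(\gamma(P)t\ge0\). For laws with \(I_P(q)<c\), dividing by \(\gamma(P)>0\) shows it is equivalent to \(t\ge (c-I_P(q))/\gamma(P)\).

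Hence \(\{t\ge0:F_q(t)\ge c\}=\{t\ge0: t\ge f^*(c,q)\}\), using the second form of Definition~\ref{def:fstar}. This includes the case of an empty index set, where \(f^*=0\) and the set is all of \([0,\infty)\). If \(f^*(c,q)<\infty\), the set is \([f^*(c,q),\infty)\), and its infimum is \(f^*(c,q)\) because \(f^*\ge0\). If \(f^*(c,q)=+\infty\), the set is empty, and the stated convention gives \(+\infty\). This proves \eqref{eq:threshold-representation}.

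\emph{Monotonicity.} Each map \(t\mapsto I_P(q)+\gamma(P)t\) is affine with nonnegative slope, hence nondecreasing. An infimum of nondecreasing functions is nondecreasing. Therefore \(L:=\lim_{t\to\infty}F_q(t)\) exists in \([0,\infty]\).

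\emph{The limit.} Write \(m_\varepsilon:=\inf\{I_P(q):P\in\mathcal P,\ 0<\gamma(P)\le\varepsilon\}\), so that \(m_\varepsilon\uparrow c_{\rm crit}(q)\) as \(\varepsilon\downarrow0\).

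For the upper bound \(L\le c_{\rm crit}(q)\), fix \(t\) and \(\varepsilon\), and restrict the infimum defining \(F_q(t)\) to laws with \(\gamma(P)\le\varepsilon\). This gives \(F_q(t)\le m_\varepsilon+\varepsilon t\); the bound is trivial if that class is empty, since then \(m_\varepsilon=+\infty\). Letting \(\varepsilon\downarrow0\) with \(t\) fixed yields \(F_q(t)\le c_{\rm crit}(q)\) for every \(t\), and hence \(L\le c_{\rm crit}(q)\).

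For the lower bound, fix \(\varepsilon>0\) and split \(\mathcal P\) according to whether \(\gamma(P)\le\varepsilon\) or \(\gamma(P)>\varepsilon\). Using \(I_P\ge0\) and \(\gamma(P)t\ge0\), we get
\[
F_q(t)\ge\min\{m_\varepsilon,\ \varepsilon t\}.
\]
Letting \(t\to\infty\) gives \(L\ge m_\varepsilon\), and then letting \(\varepsilon\downarrow0\) gives \(L\ge c_{\rm crit}(q)\).

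The only delicate point is bookkeeping rather than analysis: handling empty index sets and infinite values consistently, and the order of the two limits in the upper bound. There \(\varepsilon\) must be sent to zero before \(t\to\infty\), since \(\varepsilon t\) need not vanish otherwise. If \(\mathcal P\) were allowed to contain laws with \(\gamma(P)=0\), they would have to be excluded or handled separately, in line with the definition of \(c_{\rm crit}\).
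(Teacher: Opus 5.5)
Your proposal is correct and follows the paper's proof essentially step for step. The threshold representation comes from ``an infimum is at least \(c\) iff every term is,'' and monotonicity from taking an infimum of nondecreasing affine maps. For the limit you use the same split at \(\gamma(P)\le\varepsilon\) versus \(\gamma(P)>\varepsilon\) for the lower bound, and the same restriction to laws with \(0<\gamma(P)\le\varepsilon\), sending \(\varepsilon\downarrow0\) at fixed \(t\), for the upper bound. Your explicit treatment of the empty index set and of the case \(f^*(c,q)=+\infty\) fills in details that the paper's one-line argument for \eqref{eq:threshold-representation} leaves implicit.
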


 \begin{proof}
     Observe that
\(F_q(t)\ge c\) is equivalent to
\(t\ge(c-I_P(q))_+/\gamma(P)\) for every \(P\in\mathcal P\), which proves \eqref{eq:threshold-representation}.

For the second claim, each map \(t\mapsto I_P(q)+\gamma(P)t\) is nondecreasing, hence so is \(F_q\).  For every \(\varepsilon>0\), splitting the infimum defining \(F_q\) according to \(\gamma(P)\le\varepsilon\) or \(\gamma(P)>\varepsilon\) gives
\[
        F_q(t)\ \ge\ \min\big\{\inf\{I_P(q):0<\gamma(P)\le\varepsilon\},\ \varepsilon t\big\};
\]
letting \(t\to\infty\) and then \(\varepsilon\downarrow0\) yields \(\lim_{t\to\infty}F_q(t)\ge c_{\rm crit}(q)\).  Conversely, restricting the infimum to laws with \(0<\gamma(P)\le\varepsilon\) gives, for every \(t\ge0\),
\(F_q(t)\le\inf\{I_P(q):0<\gamma(P)\le\varepsilon\}+\varepsilon t\); letting \(\varepsilon\downarrow0\) shows \(F_q(t)\le c_{\rm crit}(q)\) for every \(t\).
 \end{proof}

 Each profile \(f^*(c,\cdot)\) displayed below solves the threshold representation \eqref{eq:threshold-representation} in closed form.  In every case the finite branch is continuous and the profile jumps to \(+\infty\) across a boundary; this jump is exactly the finite/infinite phase transition of Proposition~\ref{prop:phase}, and the critical exponent \(c_{\rm crit}\) marking it is the relative-entropy cost of driving the fitted model to the zero-adjustment boundary---\(p\to1/2\) for Rademacher increments, \(\mu\to0\) for the Gaussian, and \(\mu\downarrow\lambda\) for the exponential difference---identified explicitly below.  Because \(c_{\rm crit}\) is continuous, the set \(\{q:f^*(c,q)=+\infty\}=\{q:c>c_{\rm crit}(q)\}\) is open, so \(f^*(c,\cdot)\) is lower semicontinuous on all of \(\mathcal Q\).  By Theorem~\ref{thm:safety} the profile \(f^*\) is then itself safe: in these examples no lower semicontinuous majorant is needed.

An important interpretative point concerns the zero-buffer regimes
that occur when the observed statistic lies outside the fitted stable
region. Because our safety criterion is ex ante, the prescription
\(f^*(c,q)=0\) does not mean that future ruin risk is absent conditional
on the observed statistic. Rather, it means that every stable model
capable of explaining the observation incurs an information cost of at
least \(c\), so that the statistical rarity of the data already supplies
the target exponent in the joint data-and-ruin probability.

\subsection{Rademacher increments}\label{subsec:rademacher}

Let \(X\in\{-1,+1\}\) with \(\Prob(X=+1)=p\), where \(0<p<1/2\), so that \(\E X=2p-1<0\); the model class is \(\mathcal P=\{p:0<p<1/2\}\).  We observe the empirical type \(Q_n=n^{-1}\sum_{i=1}^n \one_{\{X_i=+1\}}\), which takes values in \(\mathcal Q=[0,1]\), and we write \(q\in[0,1]\) for a generic value.  The adjustment coefficient and the LDP rate function (Cram\'er's theorem for Bernoulli variables) are
\[
        \gamma(p)=\log\frac{1-p}{p},
        \qquad
        I_p(q)=d(q\|p)
        =q\log\frac{q}{p}+(1-q)\log\frac{1-q}{1-p},
\]
with the usual conventions at \(q\in\{0,1\}\).  Let \(\Hb(u)=-u\log u-(1-u)\log(1-u)\) denote the binary entropy.

We call \(\mathcal Q_s = [0,1/2) \subseteq \mathcal Q\) the \emph{stable region}; on its complement \(\mathcal Q\setminus \mathcal Q_s = [1/2,1]\) the empirical type already indicates nonnegative drift.  The following proposition gives \(f^*\) on all of \(\mathcal Q=[0,1]\).

\begin{proposition}[Explicit Rademacher profile]\label{prop:rademacher}
For every \(q\in[0,1]\), the critical exponent of Proposition~\ref{prop:phase} is
\[
  c_{\rm crit}(q)=d(q\|1/2)=\log 2-\Hb(q),
\]
the relative-entropy cost of driving the Bernoulli parameter to the zero-adjustment point \(p=1/2\), where \(\gamma(p)=\log((1-p)/p)\) vanishes.
On the stable region, for \(q<1/2\) and \(0\le c<c_{\rm crit}(q)\), let \(p_c(q)\in[q,1/2)\) be the unique
solution of
\[
        \Hb(p_c(q))=\Hb(q)+c .
\]
Then, on all of \(\mathcal Q=[0,1]\),
\[
        f^*(c,q)
        =
        \begin{cases}
        p_c(q)-q, & q\in \mathcal Q_s,\ 0\le c<c_{\rm crit}(q),\\[0.4ex]
        1/2-q, & q\in \mathcal Q_s,\ c=c_{\rm crit}(q),\\[0.4ex]
        0, & q\notin \mathcal Q_s,\ c\le c_{\rm crit}(q),\\[0.4ex]
        +\infty, & c>c_{\rm crit}(q) .
        \end{cases}
\]
\end{proposition}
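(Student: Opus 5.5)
The plan is to compute $F_q(t)=\inf_{0<p<1/2}\{d(q\|p)+\gamma(p)t\}$ in closed form and then read off $f^*(c,q)$ from the threshold representation of Lemma~\ref{lem:threshold-representation}. The critical exponent $c_{\rm crit}(q)$ will come out as the limit $\lim_{t\to\infty}F_q(t)$, which the same lemma guarantees.

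The first step is an algebraic identity that makes the entropy geometry visible. Since $\gamma(p)=\log\frac{1-p}{p}=\Hb'(p)$, expanding logarithms gives
\[
-q\log p-(1-q)\log(1-p)=-\log(1-p)+q\gamma(p),\qquad \Hb(p)=-\log(1-p)+p\gamma(p).
\]
Combining the two, we get $d(q\|p)=\Hb(p)-\Hb(q)+(q-p)\gamma(p)$. I would check this also holds at $q\in\{0,1\}$ under the conventions $0\log0=0$. Consequently
\[
I_p(q)+\gamma(p)t=\bigl[\Hb(p)+\Hb'(p)(s-p)\bigr]-\Hb(q),\qquad s:=q+t.
\]
The bracket is the tangent line to the concave function $\Hb$ at $p$, evaluated at $s$. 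Concavity gives the bracket $\ge \Hb(s)$ whenever $s\in[0,1]$, with equality at $p=s$. For $s>1$ one can argue directly that the bracket is still at least $\log 2$ in the relevant limit.

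The second step is to minimize over $p\in(0,1/2)$. I split according to $s$. If $s<1/2$, the point $p=s$ is admissible (for $s=0$ one takes $p\downarrow0$), so the infimum is $\Hb(s)$. If $s\ge1/2$, I differentiate in $p$: $\frac{\dd}{\dd p}[\Hb(p)+\Hb'(p)(s-p)]=\Hb''(p)(s-p)<0$ for $p<1/2\le s$. The bracket is therefore decreasing, and its infimum is the limit at $p\uparrow1/2$, which equals $\Hb(1/2)+0=\log2$. Hence
\[
F_q(t)=\Hb\bigl(\min\{q+t,1/2\}\bigr)-\Hb(q),
\]
with the convention that the minimum is $1/2$ when $q\ge1/2$. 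Letting $t\to\infty$ yields $c_{\rm crit}(q)=\log2-\Hb(q)=d(q\|1/2)$.

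The last step reads off the threshold $f^*(c,q)=\inf\{t\ge0:F_q(t)\ge c\}$ case by case.
\begin{itemize}
\item For $q<1/2$, $F_q$ is continuous and strictly increasing on $[0,1/2-q]$ and constant afterwards. If $c<c_{\rm crit}(q)$, the threshold is the unique $t$ with $\Hb(q+t)=\Hb(q)+c$; this $q+t=p_c(q)\in[q,1/2)$ by strict monotonicity of $\Hb$ on $[0,1/2]$, so $f^*=p_c(q)-q$.
\item For $q<1/2$ and $c=c_{\rm crit}(q)$, the level is first reached at $t=1/2-q$.
\item For $q\ge1/2$, $F_q\equiv c_{\rm crit}(q)$ is constant, so $f^*=0$ when $c\le c_{\rm crit}(q)$.
\item In all cases, $f^*=+\infty$ when $c>c_{\rm crit}(q)$, since then the set $\{t:F_q(t)\ge c\}$ is empty.
\end{itemize}
This last case is consistent with Proposition~\ref{prop:phase}.

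I expect the main obstacle to be the case $s\ge1/2$, where the infimum over $p$ is not attained. Handling it correctly is what produces the boundary value $1/2-q$ and the zero-buffer regime. The monotonicity argument in $p$ settles it cleanly, together with some care at the endpoints $q\in\{0,1\}$, where I would verify the identity for $d(q\|p)$ directly.
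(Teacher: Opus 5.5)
Your argument is correct, but for this example it takes a different route from the paper.

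The paper works on the primal side. It sets $H_q(y)=\inf\{d(q\|p):0<p<1/2,\ \gamma(p)\le y\}$ and rewrites $f^*(c,q)=\sup_{y>0}(c-H_q(y))_+/y$. It then argues that the maximizer is interior and solves the first-order condition $H_q(y)-yH_q'(y)=c$. The identity $d(q\|p)-\gamma(p)(q-p)=\Hb(p)-\Hb(q)$ converts this into $\Hb(p)=\Hb(q)+c$. The boundary case $c=c_{\rm crit}(q)$ then needs a separate argument: an auxiliary function $\varphi$ gives $f^*\le 1/2-q$, and monotonicity of $f^*$ in $c$ gives the reverse bound. The case $q\ge 1/2$ is handled by observing that $H_q$ is constant.

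You use the same identity, read as a tangent-line (Bregman) statement for the concave function $\Hb$, but on the dual side via Lemma~\ref{lem:threshold-representation}. Concavity handles $q+t<1/2$, and the sign of $\Hb''(p)(s-p)$ handles $q+t\ge 1/2$. Together they give the closed form $F_q(t)=\Hb(\min\{q+t,1/2\})-\Hb(q)$. From it you read off $c_{\rm crit}(q)$, the finite branch $p_c(q)-q$, the boundary value $1/2-q$, the zero-buffer regime and the infinite regime all at once. No interior-maximizer or first-order-condition analysis is needed.

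This is exactly the threshold strategy the paper uses for the Gaussian and exponential-difference examples, and here it is shorter and more uniform. What the paper's primal route buys is the parallel with the $y$-indexed projections $H_{a,b,B}(Q,y)$ and the $\sup_y$ representation of Proposition~\ref{prop:two-level-envelope-profile}.

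One small remark: your sentence about $s>1$ is unnecessary, since the monotonicity argument in $p$ already covers every $s\ge 1/2$.
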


\noindent On the complement \(q\ge1/2\), no stable law explains the observation at rate below \(c_{\rm crit}(q)\), so the safety constraint \(I_p(q)+\gamma(p)f\ge c\) already holds at \(f=0\); the profile is therefore \(0\) until \(c\) reaches \(c_{\rm crit}(q)\), beyond which no finite buffer is safe.  The proof is given in Appendix~\ref{app:examples}.

\subsection{Gaussian increments with unknown mean and variance}\label{subsec:gaussian}

Let \(X\sim N(\mu,\sigma^2)\), where \(\mu<0\) and \(\sigma^2>0\), so that \(\E X=\mu<0\); the model class is \(\mathcal P=\{(\mu,\sigma^2):\mu<0,\sigma^2>0\}\), with adjustment coefficient \(\gamma(\mu,\sigma^2)=-2\mu/\sigma^2\).  We observe \(Q_n=(\bar X_n,V_n)\), where \(\bar X_n=n^{-1}\sum_{i=1}^n X_i\) and \(V_n=n^{-1}\sum_{i=1}^n (X_i-\bar X_n)^2\).  For Gaussian samples with at least two observations \(V_n>0\) almost surely, so the statistic space is \(\mathcal Q=\mathbb R\times(0,\infty)\), and we write \((m,v)\) for a generic value.

The Gaussian law fitted to \((m,v)\) is \(N(m,v)\); it is stable, that is, has negative drift, precisely when \(m<0\).  We call \(\mathcal Q_s=\{(m,v)\in\mathcal Q:m<0\}\) the \emph{stable region}; on its complement \(\mathcal Q\setminus\mathcal Q_s=\{(m,v)\in\mathcal Q:m\ge0\}\) the fitted Gaussian has nonnegative drift.

By Cram\'er's theorem applied to \((X_i,X_i^2)\) and the contraction principle \cite[Sec.~2.2 and Theorem~4.2.1]{dembo_zeitouni}, under \(N(\mu,\sigma^2)\) the statistic \(Q_n\) satisfies an LDP with rate
\[
        I_{\mu,\sigma^2}(m,v)
        =
        D\left(N(m,v)\|N(\mu,\sigma^2)\right),
\]
which can be written explicitly as (see e.g.\ \cite{kullback1959})
\[
        I_{\mu,\sigma^2}(m,v)
        =
        \frac{(m-\mu)^2}{2\sigma^2}
        +\frac12\left(
        \frac{v}{\sigma^2}-1-
        \log\frac{v}{\sigma^2}\right).
\]
Thus, for \((m,v)\in\mathcal Q\),
\[
        f^*(c,m,v)=
        \sup_{\mu<0,\sigma^2>0}
        \frac{(c-I_{\mu,\sigma^2}(m,v))_+}{-2\mu/\sigma^2}.
\]
\begin{theorem}[Explicit Gaussian profile]\label{thm:gaussian}
Let \(c>0\) and \((m,v)\in\mathcal Q\), put \(\Delta_c^{\rm g}=e^{2c}-1\), and let
\[
        c_{\rm crit}(m,v)=\tfrac12\log\!\left(1+\tfrac{m^2}{v}\right)
\]
be the critical exponent of Proposition~\ref{prop:phase}.  Then
\[
        f^*(c,m,v)=
        \begin{cases}
        \left(\dfrac{-m-\sqrt{m^2-v\Delta_c^{\rm g}}}{2}\right)_+,
        & c\le c_{\rm crit}(m,v),\\[12pt]
        +\infty,
        & c> c_{\rm crit}(m,v),
        \end{cases}
\]
where the square root is real precisely because \(c\le c_{\rm crit}(m,v)\) is equivalent to \(m^2\ge v\Delta_c^{\rm g}\).
\end{theorem}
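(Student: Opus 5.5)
The plan is to compute $f^*$ through the threshold representation of Lemma~\ref{lem:threshold-representation}. First I will obtain $F_{(m,v)}(t)=\inf_{\mu<0,\sigma^2>0}\{I_{\mu,\sigma^2}(m,v)+\gamma(\mu,\sigma^2)t\}$ in closed form. Then I will read off the smallest $t\ge0$ with $F_{(m,v)}(t)\ge c$.

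The key step is a change of variables that turns the minimization into two elementary one-dimensional problems. Set $\gamma=-2\mu/\sigma^2>0$ and $s=1/\sigma^2>0$, so that $\mu=-\gamma/(2s)$. Expanding the square $(m-\mu)^2 s/2$ and writing $a:=m/2+t$, the objective becomes
\[
\frac{(m^2+v)s}{2}-\frac12-\frac12\log(vs)+\gamma a+\frac{\gamma^2}{8s}.
\]
I then minimize over $\gamma>0$ with $s$ held fixed.

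If $a\ge0$, the infimum is approached as $\gamma\downarrow0$; it is not attained, but only the value of the infimum matters. Minimizing the remaining expression over $s$ gives $s=1/(m^2+v)$ and the value $\tfrac12\log(1+m^2/v)=c_{\rm crit}(m,v)$.

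If $a<0$, the minimizer is $\gamma^*=-4as>0$, and the $\gamma$-terms contribute $-2a^2s$. The coefficient of $s$ becomes $K/2$, where
\[
K=v+m^2-(m+2t)^2=v-4t(m+t).
\]
For $0\le t<-m/2$ (with $m<0$) one has $K>v>0$. Minimizing over $s$ gives $s=1/K$ and the value $\tfrac12\log(K/v)$.

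Hence $F_{(m,v)}(t)=\tfrac12\log\bigl(1-4t(m+t)/v\bigr)$ for $t<-m/2$, and $F_{(m,v)}(t)=c_{\rm crit}(m,v)$ for $t\ge -m/2$. The two branches agree at $t=-m/2$, so $F$ is continuous and nondecreasing. This matches the limit $c_{\rm crit}$ in Lemma~\ref{lem:threshold-representation}, which serves as a check and identifies the critical exponent.

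Next I solve $F_{(m,v)}(t)\ge c$ case by case.

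For $m\ge0$, we have $a\ge0$ for every $t\ge0$, so $F\equiv c_{\rm crit}$. Then $f^*$ equals $0$ when $c\le c_{\rm crit}$ and $+\infty$ otherwise. The formula agrees, because $-m-\sqrt{m^2-v\Delta_c^{\rm g}}\le0$ and the positive part vanishes.

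For $m<0$ and $c\le c_{\rm crit}$, the condition $F(t)\ge c$ on $[0,-m/2)$ reads $-4t(m+t)\ge v\Delta_c^{\rm g}$, that is, $t^2+mt+v\Delta_c^{\rm g}/4\le0$. Its smallest root is $t=(-m-\sqrt{m^2-v\Delta_c^{\rm g}})/2\in[0,-m/2]$. This root is real exactly when $m^2\ge v\Delta_c^{\rm g}$, which is equivalent to $c\le c_{\rm crit}$. At equality the root is $-m/2$, where $F=c_{\rm crit}=c$ is attained, so the infimum in \eqref{eq:threshold-representation} equals the root.

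For $c>c_{\rm crit}$, we have $F\le c_{\rm crit}<c$ for all $t$, so $f^*=+\infty$; this is consistent with Proposition~\ref{prop:phase}.

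I expect the main obstacle to be bookkeeping rather than depth. The infima are over the open set $\{\gamma>0\}$, so in the $a\ge0$ branch they are only approached, and one must confirm that $F$ is exactly as stated, in particular at the boundary $c=c_{\rm crit}$. I also need to check that the optimizer $\gamma^*$ stays admissible, and that $K>0$ whenever it is used.
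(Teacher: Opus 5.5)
Your proposal is correct and follows essentially the same route as the paper: the threshold representation, an inner minimization at fixed variance (your $\gamma^*=-4as$ is exactly the paper's minimizer $\mu=m+2t$ rewritten in the coordinates $(\gamma,s)$), two branches split at $t=-m/2$ with the boundary branch approached as $\mu\uparrow0$, and then the quadratic $t^2+mt+v\Delta_c^{\rm g}/4=0$. The only slip is cosmetic: for $0\le t<-m/2$ one has $K\ge v$ rather than $K>v$ (equality at $t=0$), and this changes nothing in the argument.
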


The critical exponent \(c_{\rm crit}(m,v)\) is the relative-entropy cost of driving the fitted mean \(\mu\) to \(0\), where the adjustment coefficient \(-2\mu/\sigma^2\) vanishes.  On the stable region \(m<0\) the finite branch is the positive buffer \((-m-\sqrt{m^2-v\Delta_c^{\rm g}})/2\).  On the complement \(m\ge0\) the bracket is nonpositive, so \(f^*=0\) whenever \(c\le c_{\rm crit}(m,v)\): no stable law explains a nonnegative sample mean at rate below \(c_{\rm crit}(m,v)\), and the safety constraint already holds with zero buffer.  The proof is given in Appendix~\ref{app:examples}.

\subsection{Difference of two exponential increments}\label{subsec:expdiff}

Let \(Y\sim \Exp(\mu)\), \(Z\sim \Exp(\lambda)\), and \(X=Y-Z\), where \(Y\) and \(Z\) are independent.  The model class is \(\mathcal P=\{(\mu,\lambda):\mu>\lambda>0\}\), the stable case, in which \(\E X=1/\mu-1/\lambda<0\).  The moment generating function is
\[
        \E e^{\theta X}
        =
        \frac{\mu}{\mu-\theta}\frac{\lambda}{\lambda+\theta},
        \qquad -\lambda<\theta<\mu,
\]
and therefore the increment \(X=Y-Z\) has adjustment coefficient \(\gamma(\mu,\lambda)=\mu-\lambda\).  We assume that the decision maker observes the two components separately, so the observation model is the joint law \(P_{\mu,\lambda}\) of the pair \((Y,Z)\); the future ruin process \(M\) has increments distributed as \(X\), whose law is the image of \(P_{\mu,\lambda}\) under \((y,z)\mapsto y-z\).  Set \(Q_n=(\bar Y_n,\bar Z_n)\), where \(\bar Y_n=n^{-1}\sum_{i=1}^n Y_i\) and \(\bar Z_n=n^{-1}\sum_{i=1}^n Z_i\).  By Cram\'er's theorem applied to the independent vector \((Y_i,Z_i)\) \cite[Sec.~2.2]{dembo_zeitouni}, under \(P_{\mu,\lambda}\) the statistic \(Q_n\) satisfies an LDP with rate
\[
        I_{\mu,\lambda}(u,v)
        =
        \mu u-1-\log(\mu u)
        +
        \lambda v-1-\log(\lambda v),
        \qquad u,v>0.
\]
\begin{remark}
If only the differences \(X_i=Y_i-Z_i\) are observed, the likelihood and sufficient
statistics are different; we do not treat that case here.
\end{remark}

The means \(\bar Y_n,\bar Z_n\) are almost surely positive, so the statistic space is \(\mathcal Q=(0,\infty)^2\), with generic value \((u,v)\).  The law fitted to \((u,v)\) has rates \(\hat\mu=1/u\) and \(\hat\lambda=1/v\); it is stable, that is, has negative drift, precisely when \(u<v\).  We call \(\mathcal Q_s=\{(u,v)\in\mathcal Q:u<v\}\) the \emph{stable region}; on its complement \(\mathcal Q\setminus\mathcal Q_s=\{(u,v)\in\mathcal Q:u\ge v\}\) the fitted rates give nonnegative drift.  For \((u,v)\in\mathcal Q\),
\[
        f^*(c,u,v)
        =
        \sup_{\mu>\lambda>0}
        \frac{(c-I_{\mu,\lambda}(u,v))_+}{\mu-\lambda}.
\]

\begin{theorem}[Explicit profile for exponential differences]\label{thm:expdiff}
Let \(c>0\) and \((u,v)\in\mathcal Q\), put \(\Delta_c^{\rm e}=e^c-1\), and let
\[
        c_{\rm crit}(u,v)=\log\frac{(u+v)^2}{4uv}
\]
be the critical exponent of Proposition~\ref{prop:phase}.  Then
\[
        f^*(c,u,v)
        =
        \begin{cases}
        \left(\dfrac{v-u-\sqrt{(v-u)^2-4uv\Delta_c^{\rm e}}}{2}\right)_+,
        & c\le c_{\rm crit}(u,v),\\[12pt]
        +\infty,
        & c> c_{\rm crit}(u,v),
        \end{cases}
\]
where the square root is real precisely because \(c\le c_{\rm crit}(u,v)\) is equivalent to \((v-u)^2\ge4uv\Delta_c^{\rm e}\).
\end{theorem}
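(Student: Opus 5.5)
We compute \(f^*\) through the threshold representation of Lemma~\ref{lem:threshold-representation}: for fixed \((u,v)\) and \(t\ge0\), we evaluate \(F_{(u,v)}(t)=\inf_{\mu>\lambda>0}\{I_{\mu,\lambda}(u,v)+(\mu-\lambda)t\}\) in closed form. Then \(f^*(c,u,v)=\inf\{t\ge0:F(t)\ge c\}\).

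\emph{Unconstrained minimization.} The objective is separable: \(\mu u-\log\mu+\mu t\) plus \(\lambda v-\log\lambda-\lambda t\), up to constants. Minimizing without the constraint \(\mu>\lambda\) gives \(\mu=1/(u+t)\) and, for \(t<v\), \(\lambda=1/(v-t)\). The value is
\[
G(t)=\log\frac{u+t}{u}+\log\frac{v-t}{v}=\log\frac{(u+t)(v-t)}{uv}.
\]
The unconstrained minimizer satisfies \(\mu>\lambda\) exactly when \(u+t<v-t\), i.e.\ \(t<(v-u)/2\). In that range \(F(t)=G(t)\).

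\emph{Constrained range.} For \(t\ge(v-u)/2\) we argue by joint convexity of the objective in \((\mu,\lambda)\). The constrained infimum over the open set \(\mu>\lambda\) is then approached on the boundary \(\mu=\lambda\). There the objective reduces to \(\mu(u+v)-2-\log(\mu^2uv)\), with minimal value \(\log\frac{(u+v)^2}{4uv}=c_{\rm crit}(u,v)\), attained at \(\mu=2/(u+v)\). This is also \(G((v-u)/2)\), so \(F\) is continuous and nondecreasing. It increases along \(G\) on \([0,(v-u)/2]\), since \(G'(t)=\frac{1}{u+t}-\frac{1}{v-t}>0\) there, and it is constant equal to \(c_{\rm crit}\) afterwards. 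This matches Lemma~\ref{lem:threshold-representation}, which gives \(\lim F=c_{\rm crit}\).

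When \(u\ge v\) (the unstable region), \(F\equiv c_{\rm crit}\) for all \(t\ge0\). In that case \(f^*=0\) if \(c\le c_{\rm crit}\) and \(+\infty\) otherwise, consistent with the \((\cdot)_+\) formula, whose bracket is then nonpositive.

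\emph{Threshold inversion.} For \(u<v\) and \(c>c_{\rm crit}\), no \(t\) achieves \(F(t)\ge c\), so \(f^*=+\infty\). For \(c\le c_{\rm crit}\), if \(c\le G(0)=0\) the answer is \(0\). Otherwise we solve \(G(t)=c\), i.e.
\[
(u+t)(v-t)=uve^c \iff t^2-(v-u)t+uv\Delta_c^{\rm e}=0.
\]
We take the smaller root \(t=\frac{v-u-\sqrt{(v-u)^2-4uv\Delta_c^{\rm e}}}{2}\), which lies in \([0,(v-u)/2]\) because \(G\) is increasing there. The discriminant is nonnegative iff \(c\le c_{\rm crit}\), since \((v-u)^2+4uv=(u+v)^2\). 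Finally, since \(c>0\), we have \(\Delta_c^{\rm e}>0\), so the root is positive and the \((\cdot)_+\) is inactive on the stable branch.

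\emph{Main obstacle.} The delicate step is the constrained regime \(t\ge(v-u)/2\). We must justify that the infimum over the open cone \(\mu>\lambda\) equals the value on the diagonal, approached but not attained. The plan is to use convexity: the objective is convex on the closed cone, and its unconstrained minimizer lies outside the open cone. The minimum over the closed cone is therefore on the face \(\mu=\lambda\), and lower semicontinuity of \(I\) lets the open-cone infimum approach it.
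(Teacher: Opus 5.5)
Your skeleton is the paper's proof. You use the threshold representation, the unconstrained minimizers \(\mu=1/(u+t)\), \(\lambda=1/(v-t)\) for \(t<(v-u)/2\), and the reduction to the diagonal \(\mu=\lambda\) with value \(c_{\rm crit}(u,v)\) for \(t\ge(v-u)/2\). You then take the smaller root of the same quadratic \(t^2-(v-u)t+uv\Delta_c^{\rm e}=0\). The inversion step and your treatment of \(u\ge v\) are correct.

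The one step that does not go through as written is the one you flag yourself, the constrained range. That range always contains \(t\ge v\), since \(v>(v-u)/2\). There the coefficient \(v-t\) of \(\lambda\) is nonpositive, so the objective is unbounded below on the positive orthant as \(\lambda\to\infty\). Hence there is no unconstrained minimizer that could ``lie outside the open cone''. Your own restriction \(\lambda=1/(v-t)\) to \(t<v\) already signals this. You could rephrase the convexity argument as: a minimizer over \(\{\mu\ge\lambda>0\}\) lying in the open cone would be a global unconstrained minimizer. That version still needs the minimum over the closed cone to be attained, which you never establish; attainment is true, but requires a coercivity check. Separately, to pass from the diagonal value to the infimum over the open cone you need continuity (upper semicontinuity) of the objective at points \(\mu=\lambda>0\), not lower semicontinuity. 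Continuity holds trivially here, so that part is only a wording fix.

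A clean repair stays inside your convexity viewpoint and removes the attainment issue. Write \(\psi(\mu,\lambda)=\mu(u+t)+\lambda(v-t)-\log\mu-\log\lambda\). The derivative of \(\psi\) at \((r,r)\) in the direction \((1,-1)\) is \((u+t)-1/r-(v-t)+1/r=u-v+2t\ge0\). The gradient inequality then gives \(\psi(r+d,r-d)\ge\psi(r,r)+d(u-v+2t)\ge\psi(r,r)\) for \(0<d<r\), uniformly in all \(t\ge(v-u)/2\), including \(t\ge v\). Continuity along \(d\downarrow0\) gives the matching upper bound. This is exactly the paper's argument: it substitutes \(\mu=r+d\), \(\lambda=r-d\), so the objective becomes \(r(u+v)+d(u-v+2t)-2-\log((r^2-d^2)uv)\). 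It then bounds this below by its \(d=0\) value using \(d(u-v+2t)\ge0\) and \(r^2-d^2<r^2\), and minimizes over \(r\) at \(r=2/(u+v)\).
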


The critical exponent \(c_{\rm crit}(u,v)\) is the relative-entropy cost of driving the fitted rates together, \(\mu\downarrow\lambda\), where the adjustment coefficient \(\mu-\lambda\) vanishes.  On the stable region \(u<v\) the finite branch is the positive buffer \((v-u-\sqrt{(v-u)^2-4uv\Delta_c^{\rm e}})/2\).  On the complement \(u\ge v\) the bracket is nonpositive, so \(f^*=0\) whenever \(c\le c_{\rm crit}(u,v)\): no stable law explains an observation with \(\bar Y_n\ge\bar Z_n\) at rate below \(c_{\rm crit}(u,v)\), and the safety constraint already holds with zero buffer.  The proof is given in Appendix~\ref{app:examples}.

\section{Nonparametric envelope classes}\label{sec:nonparametric}

We now turn to nonparametric model classes. The central difficulty is
uniform control of the upper tail. For the worst-case adjustment
coefficient to remain genuinely data-dependent, the model class must
exclude laws that place a vanishing amount of probability at
increasingly distant right-tail locations. 
A single exponential constraint \(\E_P e^{aX}\le1\) does not enforce it, and Section~\ref{sec:unit-expon-envel} shows that the resulting profile then degenerates to a data-independent rule.  One additional, superlinear exponential moment restores uniform integrability, and Section~\ref{subsec:two-level-envelope} builds from it a well-posed, genuinely data-dependent safe rule.

In this section the decision maker observes the increments \(X_1,\dots,X_n\) directly, and the statistic is their empirical distribution \(Q_n=n^{-1}\sum_{i=1}^n\delta_{X_i}\) (the empirical law \(P_n\) of Section~\ref{sec:problem-formulation}).  The statistic space \(\mathcal Q\) is then a space of probability laws, and we write \(Q\) for a generic value, interpreted as a candidate empirical law.  For this choice of statistic, Sanov's theorem motivates identifying the large-deviation rate of the empirical distribution under a law \(P\) with the relative entropy \(D(Q\|P)\), the information cost that enters the decision-centric profile.  For a class \(\mathcal C\) of increment laws, this profile is that of Definition~\ref{def:fstar} with the empirical law as statistic,
\begin{equation}
\label{eq-ansatz}
        f^*_{\mathcal C}(c,Q)
        =
        \sup_{P\in\mathcal C}
        \frac{(c-D(Q\|P))_+}{\gamma(P)} .
\end{equation}
This section compares two unrestricted nonparametric choices of \(\mathcal C\).
We note that the form of $f^*_{\mathcal C}$ in this case should only be seen as a candidate solution: in the level of generality considered in this section, we are not aware of a suitable version of Sanov's theorem. Thus, while Assumption~\ref{ass:LDP} and Sanov's theorem for empirical measures can be seen as an ansatz leading to the candidate profile (\ref{eq-ansatz}), we analyze its properties directly in this section.

\subsection{The unit exponential envelope is degenerate}
\label{sec:unit-expon-envel}

The simplest nonparametric envelope imposes a single exponential moment bound.  Fix any \(r>0\) and consider the unit envelope class \(\mathcal P_r=\{P:\E_Pe^{rX}\le1\}\); every law in it has \(\gamma(P)\ge r\), so this one constraint already guarantees the adjustment coefficient stays above the floor \(r\).  Define the projection cost \(H_r(Q):=\inf_{P\in\mathcal P_r}D(Q\|P)\), with the convention that the infimum over the empty set is \(+\infty\).
The projection enjoys a convenient dual characterization, reused at exponent \(b\) in the next subsection.  For \(\xi=(\lambda,\beta)\in\mathbb R_+^2\) write \(s^r_\xi(x):=\lambda+\beta e^{rx}\), let \(\mathcal N:=\{(\lambda,\beta)\in\mathbb R_+^2:\lambda+\beta=1\}\) be the (compact) unit simplex, and define
\[
        H^d_r(Q):=\sup_{\xi\in\mathcal N}\int\log s^r_\xi\,dQ,
\]
with the extended-real convention that the logarithmic integral is \(-\infty\) if \(s^r_\xi\) vanishes on a set of positive \(Q\)-mass.

\begin{lemma}[Duality for the unit-envelope projection]\label{lem:unit-dual}
For every \(r>0\) and every probability law \(Q\) on \(\mathbb R\), one has \(H^d_r(Q)\le H_r(Q)\).  If moreover \(\E_Q e^{rX}<\infty\), then \(H^d_r(Q)=H_r(Q)\).
\end{lemma}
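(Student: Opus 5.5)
Weak duality comes from a pointwise Gibbs-type inequality, and strong duality from an explicit primal candidate built at the maximizer of the dual objective.

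\emph{Weak duality.} Fix \(P\in\mathcal P_r\) with \(D(Q\|P)<\infty\) (otherwise there is nothing to prove), and fix \(\xi=(\lambda,\beta)\in\mathcal N\). We may assume \(\int\log s^r_\xi\,dQ>-\infty\). Since \(Q\ll P\), Jensen's inequality applied to \(\log\) gives
\[
\int\log s^r_\xi\,dQ-D(Q\|P)=\int\log\Big(s^r_\xi\frac{dP}{dQ}\Big)dQ\le\log\int_{\{dQ/dP>0\}} s^r_\xi\,dP\le\log\big(\lambda+\beta\,\E_Pe^{rX}\big)\le\log(\lambda+\beta)=0 .
\]
Integrability needs a little care. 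Because \(s^r_\xi\ge\min(\lambda,\beta e^{rx})\) and \(\log s^r_\xi\le\log(1+e^{rx})\le\log 2+r x_+\), one can split the integrand into positive and negative parts. Alternatively, one can use the elementary inequality \(\log a\le a-1\) pointwise, in the form \(\int\log(s\,dP/dQ)\,dQ\le\int s\,dP-1\), which avoids the issue entirely. Taking the supremum over \(\xi\) and the infimum over \(P\) yields \(H^d_r\le H_r\).

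\emph{Strong duality under \(\E_Qe^{rX}<\infty\).} If \(\E_Qe^{rX}\le1\), then \(Q\in\mathcal P_r\), so \(H_r=0\), and \(\xi=(1,0)\) gives \(H^d_r\ge0\). Hence assume \(\E_Qe^{rX}>1\). The map \(\beta\mapsto\phi(\beta)=\int\log(1-\beta+\beta e^{rX})\,dQ\) is concave on \([0,1]\) and finite on \([0,1)\). Its derivative is
\[
\phi'(\beta)=\int\frac{e^{rx}-1}{1-\beta+\beta e^{rx}}\,dQ,
\]
and dominated convergence is justified since the integrand is bounded by \(\max(e^{rx},1/(1-\beta))\)-type quantities, which are \(Q\)-integrable thanks to \(\E_Qe^{rX}<\infty\). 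We have \(\phi'(0)=\E_Qe^{rX}-1>0\), so a maximizer \(\beta^*\in(0,1]\) exists by compactness and upper semicontinuity.

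\emph{Case \(\beta^*<1\).} The first-order condition \(\phi'(\beta^*)=0\) holds. Define \(dP^*=dQ/s^r_{\xi^*}\). Then
\[
\int dP^*=\int\frac{1}{s^r_{\xi^*}}\,dQ=1-\beta^*\phi'(\beta^*)=1,
\]
using \(1-\beta+\beta e^{rx}-\beta(e^{rx}-1)=1\). Similarly,
\[
\int e^{rx}\,dP^*=1+(1-\beta^*)\phi'(\beta^*)=1 .
\]
Thus \(P^*\in\mathcal P_r\), and \(D(Q\|P^*)=\int\log s^r_{\xi^*}\,dQ=H^d_r(Q)\le H_r(Q)\), which gives equality.

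\emph{Case \(\beta^*=1\), i.e.\ \(\phi'(1)\ge0\).} Here \(\phi'(1)=1-\E_Qe^{-rX}\ge0\). Put \(dP'=e^{-rx}dQ\), which has mass \(\E_Qe^{-rX}\le1\), and then add the missing mass \(1-\E_Qe^{-rX}\) as an atom placed far to the left, at a point \(-L\). This law satisfies
\[
\E e^{rX}=1+(1-\E_Qe^{-rX})e^{-rL}.
\]
That is slightly above 1, so I would instead lower the moment by shifting the atom's location and rebalancing: take the atom at \(-L\) with \(L\to\infty\), and scale to get \(\E e^{rX}\le1\) exactly, for instance by mixing with a small weight. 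The resulting relative entropy is \(\int rx\,dQ+o(1)=\phi(1)+o(1)\), which yields \(H_r\le H^d_r\) in the limit.

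The hard step is this boundary case: the dual optimum sits on the face of \(\mathcal N\), and the primal infimum is not attained. It requires a careful approximating sequence, and possibly a treatment of \(Q\) with atoms where \(\lambda=0\) matters. A cleaner alternative would be to invoke a general convex duality theorem for moment-constrained \(I\)-projections (Csisz\'ar), with \(\E_Qe^{rX}<\infty\) supplying the needed qualification.
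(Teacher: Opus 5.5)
Your weak-duality step uses the same Gibbs-type inequality as the paper. Your strong-duality argument is correct and takes a genuinely different route.

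The paper's route:
\begin{itemize}
\item it relaxes the primal to subprobability measures;
\item it invokes Lagrange duality over measures with the Slater point $tQ$, which is where $\E_Q e^{rX}<\infty$ is needed;
\item it evaluates the inner infimum by a Lebesgue decomposition and then normalizes along rays.
\end{itemize}
You instead use that $\mathcal N$ is a segment and solve the concave one-dimensional dual directly. You then read a primal certificate off the first-order condition: $dP^*=dQ/s^r_{\xi^*}$ is a probability law with $\E_{P^*}e^{rX}=1$ and $D(Q\|P^*)=\phi(\beta^*)$.

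What each route buys:
\begin{itemize}
\item Yours is elementary and exhibits the I-projection explicitly. It shows exactly when the infimum is attained (interior $\beta^*$, or $\E_Qe^{-rX}=1$) and when mass must escape to $-\infty$.
\item Yours also needs less than the stated hypothesis. On $(0,1)$ the integrand of $\phi'$ is bounded by $\max\{1/\beta,1/(1-\beta)\}$, and $\phi'(0^+)=+\infty$ still forces $\beta^*>0$, so $\E_QX_+<\infty$ suffices.
\item The paper's route needs no case analysis. It carries over directly to the four-constraint projection of Lemma~\ref{lem:compact-interior}, where $\mathcal N_y$ has a semi-infinite positivity constraint and a first-order analysis would face many more boundary faces.
\end{itemize}

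Two points should be tightened.
\begin{itemize}
\item Existence of $\beta^*$ requires upper semicontinuity of $\phi$ on $[0,1]$. The only delicate point is $\beta=1$, and it follows from reverse Fatou with the integrable majorant $\log s^r_\xi\le rx_+$.
\item The boundary case is easier than you fear. If $\beta^*=1$, then $\phi'\ge0$ on $(0,1)$, and monotone convergence gives $m':=\E_Qe^{-rX}\le1$. Take $P_\varepsilon=(1-\varepsilon)P'+\bigl(1-(1-\varepsilon)m'\bigr)\delta_{-L}$, with $L$ so large that the atom adds at most $\varepsilon$ to the exponential moment. Then $P_\varepsilon\in\mathcal P_r$, and by monotonicity of $D(Q\|\cdot)$ in its second argument, $D(Q\|P_\varepsilon)\le r\E_QX-\log(1-\varepsilon)\to\phi(1)$. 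This is the same device as the paper's relaxation step. Since $\lambda^*=0$ makes $s^r_{\xi^*}=e^{rx}$ strictly positive, atoms of $Q$ play no role.
\end{itemize}
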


\begin{proof}
\emph{Weak duality.}  Let \(P\in\mathcal P_r\) and \(\xi=(\lambda,\beta)\in\mathcal N\).  Since \(\lambda+\beta=1\) and \(\E_P e^{rX}\le1\),
\[
        \E_P s^r_\xi(X)=\lambda+\beta\,\E_P e^{rX}\le\lambda+\beta=1.
\]
The entropy variational inequality \(D(Q\|P)\ge\int\log h\,dQ-\log\int h\,dP\), valid for every nonnegative measurable \(h\), applied to \(h=s^r_\xi\) gives
\[
        D(Q\|P)\ge\int\log s^r_\xi\,dQ-\log\E_P s^r_\xi(X)\ge\int\log s^r_\xi\,dQ,
\]
since \(\log\E_P s^r_\xi\le\log1=0\).  Taking the infimum over \(P\in\mathcal P_r\) and the supremum over \(\xi\in\mathcal N\) proves \(H^d_r(Q)\le H_r(Q)\).

\emph{Relaxation.}  We first relax the primal to nonnegative measures under an inequality normalization,
\[
        \widetilde H_r(Q):=\inf\Big\{D(Q\|P):P\ge0,\ Q\ll P,\ \textstyle\int dP\le1,\ \int e^{rX}\,dP\le1\Big\},
\]
and show \(\widetilde H_r(Q)=H_r(Q)\).  Clearly \(\widetilde H_r\le H_r\).  For the reverse, let \(P\) be feasible for \(\widetilde H_r\) and \(\varepsilon\in(0,1)\); scaling by \(1-\varepsilon\) opens strict slack in the moment constraint, so the probability law
\[
        P_\varepsilon:=(1-\varepsilon)P+\kappa\,\delta_{-L},\qquad
        \kappa:=1-(1-\varepsilon)\textstyle\int dP\in[\varepsilon,1],
\]
with \(L\) large enough that \(\kappa e^{-rL}\le\varepsilon\), satisfies \(\int e^{rX}\,dP_\varepsilon=(1-\varepsilon)\int e^{rX}\,dP+\kappa e^{-rL}\le(1-\varepsilon)+\varepsilon=1\), so \(P_\varepsilon\in\mathcal P_r\).  Since \(P_\varepsilon\ge(1-\varepsilon)P\) and \(D(Q\|\cdot)\) is nonincreasing in its second argument, \(D(Q\|P_\varepsilon)\le D(Q\|(1-\varepsilon)P)=D(Q\|P)-\log(1-\varepsilon)\); the infimum over \(P\) followed by \(\varepsilon\downarrow0\) gives \(H_r\le\widetilde H_r\).

\emph{Strong duality.}  Assume \(\E_Q e^{rX}<\infty\).  The relaxed program minimizes the convex functional \(P\mapsto D(Q\|P)\) subject to two affine inequality constraints valued in \(\mathbb R^2\), and \(P_S=tQ\) with \(t\in(0,1)\) small enough that \(t\max\{1,\E_Q e^{rX}\}<1\) is strictly feasible with \(D(Q\|P_S)=-\log t<\infty\).  By the Lagrange duality theorem for convex programs with finitely many constraints admitting a Slater point \cite[Section~8.6]{luenberger1969}, the dual value is attained and equals the primal,
\[
        H_r(Q)=\max_{\lambda,\beta\ge0}\Big\{\inf_{P\ge0}\big[D(Q\|P)+\textstyle\int(\lambda+\beta e^{rX})\,dP\big]-\lambda-\beta\Big\}.
\]
To evaluate the inner infimum over measures, parametrize \(P\) by its Lebesgue decomposition relative to \(Q\): by the Radon--Nikodym and Lebesgue decomposition theorems, \(\dd P=w\,\dd Q+\dd P^\perp\) with \(w:=\dd P/\dd Q\ge0\) measurable and \(P^\perp\ge0\) singular to \(Q\), and \((w,P^\perp)\) ranges over all such pairs.  The objective then splits as
\[
        D(Q\|P)+\int s^r_\xi\,dP=\int\big(s^r_\xi\,w-\log w\big)\,dQ+\int s^r_\xi\,dP^\perp .
\]
Since \(s^r_\xi\ge0\), the singular term is minimized by \(P^\perp=0\).  For the absolutely continuous part, the elementary inequality \(sv-\log v\ge1+\log s\) for all \(v>0\), with equality at \(v=1/s\), integrates to \(\int(s^r_\xi w-\log w)\,dQ\ge\int(1+\log s^r_\xi)\,dQ\) for every admissible \(w\), and the measurable density \(w=1/s^r_\xi\) attains it.  Hence the infimum equals \(\int(1+\log s^r_\xi)\,dQ=\int\log s^r_\xi\,dQ+1\), using \(\int\dd Q=1\), attained at \(\dd P=\dd Q/s^r_\xi\).  The dual is therefore \(\max_{\lambda,\beta\ge0}[\int\log(\lambda+\beta e^{rX})\,dQ+1-\lambda-\beta]\); optimizing along each ray \((\lambda,\beta)\mapsto(t\lambda,t\beta)\) removes the affine term and normalizes \(\lambda+\beta=1\), leaving \(\sup_{\xi\in\mathcal N}\int\log s^r_\xi\,dQ=H^d_r(Q)\).  Hence \(H^d_r(Q)=H_r(Q)\).
\end{proof}

The floor \(r\) is only a worst-case guarantee on the adjustment coefficient.  If the data are generated by a law \(P\) with \(\gamma(P)>r\) strictly, one might hope that one may learn a smaller safe capital buffer.  The following proposition shows that this hope is unfounded: under the unrestricted unit envelope the worst-case profile always behaves as if \(\gamma=r\), and the data enter only through the projection cost \(H_r(Q)\).

\begin{proposition}[Boundary collapse under the unit envelope]\label{prop:unit-envelope-collapse}
\[
        f^*_{\mathcal P_r}(c,Q)
        =
        \frac{(c-H_r(Q))_+}{r}.
\]
\end{proposition}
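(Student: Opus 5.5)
We prove the two inequalities separately. For the upper bound, every \(P\in\mathcal P_r\) satisfies \(\E_Pe^{rX}\le1\), hence \(\gamma(P)\ge r\). Moreover \(D(Q\|P)\ge H_r(Q)\) by definition of the projection cost. Therefore
\[
\frac{(c-D(Q\|P))_+}{\gamma(P)}\le\frac{(c-H_r(Q))_+}{r}
\]
for every \(P\in\mathcal P_r\), and taking the supremum gives \(f^*_{\mathcal P_r}(c,Q)\le (c-H_r(Q))_+/r\). If \(H_r(Q)\ge c\) this upper bound is zero, so the identity holds trivially. We may therefore assume \(H_r(Q)<c\); in particular \(\mathcal P_r\) contains laws \(P\) with \(D(Q\|P)<\infty\).

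For the lower bound we use the right-tail spike mechanism announced in the introduction. Fix \(\varepsilon>0\) and pick \(P\in\mathcal P_r\) with \(D(Q\|P)\le H_r(Q)+\varepsilon<c\). The goal is to perturb \(P\) into a law \(\tilde P\in\mathcal P_r\) whose relative entropy is barely larger but whose adjustment coefficient is at most \(r+\varepsilon\). Since \(D(Q\|\cdot)\) is unchanged by adding mass singular to \(Q\) apart from a renormalization factor, we take
\[
\tilde P=(1-\eta)P_\theta+\eta\,\delta_{L},\qquad \eta\in(0,1),\quad L\ \text{large},
\]
where \(P_\theta\) is a slight modification of \(P\) with strict slack in the constraint, \(\E_{P_\theta}e^{rX}\le 1-\theta\). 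Such a \(P_\theta\) is obtained as in the relaxation step of Lemma~\ref{lem:unit-dual}, by mixing with \(\delta_{-L'}\), at entropy cost at most \(-\log(1-\theta')\). We then fix \(\eta\) and \(\theta\) small and choose \(L\) so that \(\eta e^{rL}\) uses up the slack exactly: \((1-\eta)(1-\theta)+\eta e^{rL}= 1\). This makes \(\E_{\tilde P}e^{rX}\le 1\), so \(\tilde P\in\mathcal P_r\) and \(\gamma(\tilde P)\ge r\). At the same time, for any \(s=r+\varepsilon\),
\[
\E_{\tilde P}e^{sX}\ge\eta e^{sL}=\eta e^{rL}e^{\varepsilon L}\approx(1-(1-\eta)(1-\theta))\,e^{\varepsilon L}>1
\]
once \(L\) is large. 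We arrange this by letting \(\theta\to0\) together with \(\eta\) so that \(L\to\infty\) while \(\eta e^{rL}\) stays of order \(\theta\). The result is \(\gamma(\tilde P)\le r+\varepsilon\). Finally, \(\tilde P\ge(1-\eta)(1-\theta')P\) gives \(D(Q\|\tilde P)\le D(Q\|P)-\log(1-\eta)-\log(1-\theta')\), which lies within \(\varepsilon\) of \(H_r(Q)\) for small parameters.

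Combining the two estimates, \(f^*_{\mathcal P_r}(c,Q)\ge (c-H_r(Q)-3\varepsilon)_+/(r+\varepsilon)\), and letting \(\varepsilon\downarrow0\) yields the matching lower bound.

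The main obstacle is the bookkeeping of the spike. We must use the slack \(\theta\) from the downward mixing to create room for \(\eta e^{rL}\). We must also keep \(\E_{\tilde P}e^{sX}\) finite on \([0,b]\) if the standing assumptions require it; a point mass at \(L\) is harmless here. And we must check that the Cramér condition \(\gamma(\tilde P)\in(0,b)\), if it is imposed on members of \(\mathcal C\), can be met. If needed, we would replace \(\tilde P\) by a further tiny perturbation so that \(\E_{\tilde P}e^{\gamma X}=1\) holds exactly at some \(\gamma\in[r,r+\varepsilon]\), using continuity of \(s\mapsto \E_{\tilde P}e^{sX}\) on \([r,r+\varepsilon]\). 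That continuity holds because \(\tilde P\) has bounded right tail relative to \(P\).
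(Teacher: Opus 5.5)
Your proof is correct and is essentially the paper's argument: open strict slack in \(\E_Pe^{rX}\le1\) by mixing in a far-left atom, then add a right-tail atom at \(L\) with weight of order \(e^{-rL}\). This keeps the law in \(\mathcal P_r\) at vanishing entropy cost while \(\E e^{(r+\varepsilon)X}\to\infty\). The paper uses weight \(\tfrac{1-m_r}{2}e^{-rz}\) rather than exhausting the slack. You also do not need to send \(\theta\to0\): since \(\eta e^{rL}=\theta+\eta(1-\theta)\ge\theta\), letting \(\eta\downarrow0\) alone forces \(L\to\infty\).

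Your closing paragraph about the Cram\'er condition is unnecessary, since \(\mathcal P_r=\{P:\E_Pe^{rX}\le1\}\) imposes no such requirement on its members.
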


\begin{proof}
The upper bound is immediate.  Since \(\gamma(P)\ge r\) for every
\(P\in\mathcal P_r\),
\[
        \frac{(c-D(Q\|P))_+}{\gamma(P)}
        \le
        \frac{(c-D(Q\|P))_+}{r}
        \le
        \frac{(c-H_r(Q))_+}{r}.
\]

It remains to prove the reverse inequality.  Let \(\varepsilon>0\) and choose
\(P\in\mathcal P_r\) such that \(D(Q\|P)\le H_r(Q)+\varepsilon\).  Fix \(\kappa\in(0,1)\).  If \(\E_Pe^{rX}=1\), choose \(L>0\) and put \(\widetilde P=(1-\kappa)P+\kappa\delta_{-L}\).  Then
\[
        \E_{\widetilde P}e^{rX}
        =(1-\kappa)\E_Pe^{rX}+\kappa e^{-rL}<1,
\]
and \(\widetilde P\ge(1-\kappa)P\).  Since \(D(Q\|P)<\infty\), we have
\(Q\ll P\), and hence \(Q\ll\widetilde P\).  Therefore
\[
        D(Q\|\widetilde P)
        \le
        D(Q\|P)-\log(1-\kappa).
\]
Choosing \(\kappa\) so small that \(-\log(1-\kappa)<\varepsilon\), and replacing
\(P\) by \(\widetilde P\), we may assume, at the cost of increasing
\(D(Q\|P)\) by at most \(\varepsilon\), that \(m_r:=\E_Pe^{rX}<1\).

Let \(\eta>0\).  For \(z>0\), put \(s_z=((1-m_r)/2)e^{-rz}\) and \(P_z=(1-s_z)P+s_z\delta_z\).  Then
\[
        \E_{P_z}e^{rX}
        =(1-s_z)m_r+s_ze^{rz}
        \le \frac{1+m_r}{2}<1,
\]
so \(P_z\in\mathcal P_r\).  Moreover \(P_z\ge(1-s_z)P\), and hence
\[
  D(Q\|P_z)
  \le
  D(Q\|P)-\log(1-s_z)
  \longrightarrow D(Q\|P)
\]
for $z\to\infty$.
On the other hand,
\[
        \E_{P_z}e^{(r+\eta)X}
        \ge s_ze^{(r+\eta)z} \to\infty .
\]
Thus, for all large \(z\), the adjustment coefficient of \(P_z\) is at most
\(r+\eta\).  Therefore
\[
        f^*_{\mathcal P_r}(c,Q)
        \ge
        \frac{(c-D(Q\|P_z))_+}{r+\eta}.
\]
Letting \(z\to\infty\), then \(\varepsilon\downarrow0\), and finally
\(\eta\downarrow0\), gives the lower bound.
\end{proof}

The proposition shows that the data enter the unrestricted unit-envelope problem
only through the projection cost \(H_r(Q)\); the adjustment coefficient itself
is always driven to the floor \(r\) by arbitrarily small unseen right-tail mass.  This is exactly what defeats the hope raised above.  Suppose the data are generated by a law \(P\) with \(\gamma(P)>r\), equivalently \(\E_Pe^{rX}<1\), so that \(P\) lies strictly inside the envelope.  By the law of large numbers \(n^{-1}\sum_{i=1}^n e^{rX_i}\to\E_Pe^{rX}<1\), so the empirical law satisfies \(Q_n\in\mathcal P_r\) eventually almost surely and hence \(H_r(Q_n)=0\).  Hence, with probability one, for all large \(n\) the projection cost vanishes and the rule collapses to the data-independent coefficient
\[
  f^*_{\mathcal P_r}(c,Q_n)=\frac{c}{r}.
\]

The remedy is to add uniform control of the upper tail, the subject of the next subsection.  In a sense this is unsurprising. Already for a single law, the standing assumptions of Section~\ref{sec:problem-formulation} required tail control of exactly this kind, i.e., finiteness of \(\Lambda_P\) on \([0,b]\) and hence \(\E_P e^{bX}<\infty\). The two-level envelope of the next subsection simply turns this per-law finiteness into a bound \(\E_P e^{bX}\le B\) that is uniform across the class.

\subsection{A two-level envelope and a conservative dual rule}
\label{subsec:two-level-envelope}

Proposition~\ref{prop:unit-envelope-collapse} identifies the precise obstruction created by the
single exponential envelope: a sequence of laws may place
vanishingly small mass at locations tending to \(+\infty\), while
preserving the constraint
\(\mathbb E_P e^{aX}\leq1\). Although this perturbation has negligible
relative-entropy cost, it can drive the adjustment coefficient down to
the boundary value \(a\).
To exclude this mechanism, the family
$
\bigl\{e^{aX}:P\in\mathcal P\bigr\}
$
must have uniformly negligible tails. A direct way to obtain this is
to impose one stronger exponential-moment bound. Indeed, if \(b>a\)
and
\[
\sup_{P\in\mathcal P}\mathbb E_P e^{bX}\leq B,
\]
then, for every \(K\geq1\),
\[
\sup_{P\in\mathcal P}
\mathbb E_P\left[
    e^{aX}\mathbf 1_{\{e^{aX}>K\}}
\right]
\leq
K^{1-b/a}
\sup_{P\in\mathcal P}\mathbb E_P e^{bX}  \leq
B K^{1-b/a}
\longrightarrow 0
\qquad\text{as }K\to\infty.
\]
Thus the higher exponential moment makes
\(\{e^{aX}:P\in\mathcal P\}\) uniformly integrable and rules out the
escaping-spike construction. This is the power-function instance of
the de la Vall\'ee--Poussin criterion for uniform integrability; see,
for example, \cite[Corollary~6.21]{Klenke2014}. Accordingly, fix
\(0<a<b\) and \(B>1\), and define
\[
\mathcal P_{a,b,B}
:=
\left\{
P:
\mathbb E_Pe^{aX}\leq1,\quad
\mathbb E_Pe^{bX}\leq B
\right\}.
\]
The second constraint gives the uniform  estimate \(P(X>x)\le B e^{-bx}\), \(x\ge0\), and thus rules out the spike mechanism that made the unit-envelope formulation degenerate in Section \ref{sec:unit-expon-envel}.

For \(a<y<b\), define
\[
        H_{a,b,B}(Q,y)
        :=
        \inf\left\{
        D(Q\|P):
        P\in\mathcal P_{a,b,B},\,
        \E_Pe^{yX}\ge1
        \right\},
\]
with the convention that the infimum over the empty set is \(+\infty\); this is the entropy cost of reaching adjustment coefficients not larger than \(y\).
On \(\{\gamma(P)\ge b\}\) the denominator in the profile is bounded below by \(b\), and this set is exactly the unit envelope of Section~\ref{sec:unit-expon-envel} at exponent \(b\): the condition \(\gamma(P)\ge b\) is equivalent to \(\E_P e^{bX}\le1\), and since \(a<b\) and \(B>1\) this makes both constraints defining \(\mathcal P_{a,b,B}\) automatic, so
\[
        \{P\in\mathcal P_{a,b,B}:\gamma(P)\ge b\}=\{P:\E_P e^{bX}\le1\}=\mathcal P_b .
\]
The endpoint projection is therefore \(H_b(Q)\), the unit-envelope projection at exponent \(b\).

\begin{proposition}[Profile under the two-level envelope]
\label{prop:two-level-envelope-profile}
For every \(c>0\) and every probability law \(Q\),
\[
        f^*_{\mathcal P_{a,b,B}}(c,Q)
        =
        \max\left\{
        \sup_{a<y<b}
        \frac{(c-H_{a,b,B}(Q,y))_+}{y},
        \frac{(c-H_b(Q))_+}{b}
        \right\}.
\]
\end{proposition}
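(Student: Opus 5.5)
The plan is to prove the two inequalities separately, partitioning $\mathcal P_{a,b,B}$ according to the value of $\gamma(P)$. Every $P\in\mathcal P_{a,b,B}$ satisfies $\E_Pe^{aX}\le1$, so $\gamma(P)\ge a$. We split into three pieces: laws with $\gamma(P)\ge b$, which by the identity just before the proposition form exactly $\mathcal P_b$; laws with $a<\gamma(P)<b$; and laws with $\gamma(P)=a$.

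\emph{Upper bound ($\le$).} Fix $P\in\mathcal P_{a,b,B}$ with $D(Q\|P)<c$; laws with $D(Q\|P)\ge c$ contribute zero.
\begin{itemize}
\item If $\gamma(P)\ge b$, then $P\in\mathcal P_b$, so $D(Q\|P)\ge H_b(Q)$. Since $\gamma(P)\ge b$, the contribution is at most $(c-H_b(Q))_+/b$.
\item If $\gamma(P)=g\in(a,b)$, take any $y\in(g,b)$. The characterization $\gamma(P)\ge y\iff\E_Pe^{yX}\le1$ from Section~\ref{sec:problem-formulation} gives $\E_Pe^{yX}>1$. Hence $P$ is feasible for $H_{a,b,B}(Q,y)$, and $(c-D(Q\|P))_+/g\le (c-H_{a,b,B}(Q,y))_+/g$. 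Letting $y\downarrow g$, this quantity is the limit of $(c-H_{a,b,B}(Q,y))_+\,y/g\cdot 1/y$, and $y/g\to1$, so the contribution is bounded by the supremum over $y$. No continuity of $H$ in $y$ is needed: the bound holds at each such $y$ up to the factor $y/g$, which can be made arbitrarily close to $1$.
\item If $\gamma(P)=a$, then $\E_Pe^{yX}>1$ for every $y\in(a,b)$. The same argument with $y\downarrow a$ bounds the contribution by $\sup_y (c-H(Q,y))_+/y\cdot y/a$, which tends to the same supremum.
\end{itemize}

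\emph{Lower bound ($\ge$).} For the endpoint term, $\mathcal P_b\subset\mathcal P_{a,b,B}$ and every $P\in\mathcal P_b$ has $\gamma(P)\ge b$, so at first glance only the weaker bound $(c-D(Q\|P))_+/\gamma(P)$ is available. Two options:
\begin{itemize}
\item Perturb $P$ toward $\gamma=b$: mix in a small spike $s\delta_z$ with $z$ bounded, chosen so that $\E e^{bX}$ rises to exactly $1$. Then $\E e^{bX}=1$ forces $\gamma\le b$, hence $\gamma=b$, and $B>1$ keeps the $b$-moment constraint satisfied. The relative-entropy cost is $-\log(1-s)\to0$, as in the proof of Proposition~\ref{prop:unit-envelope-collapse}.
\item Alternatively, note that near-optimal $P$ may be taken to saturate $\E_Pe^{bX}=1$.
\end{itemize}
We take the first, perturbation, route. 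Similarly, for each $y\in(a,b)$ and each $P$ feasible for $H_{a,b,B}(Q,y)$, we have $\E_Pe^{yX}\ge1$. If $\gamma(P)\le y$, then $P$ contributes at least $(c-D(Q\|P))_+/y$, and taking the infimum over such $P$ yields the $y$-term.

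\emph{Main obstacle.} The delicate case is $\E_Pe^{yX}=1$ exactly with $\Lambda_P\equiv0$ on an interval, where $\gamma(P)$ could exceed $y$. Convexity with $\Lambda_P'(0+)<0$ is not guaranteed for arbitrary laws in the class, so this case cannot be ruled out directly. The fix is again a mixture: replace $P$ by $(1-s)P+s\delta_z$ with $z>0$ moderate and $s$ small, so that $\E e^{yX}$ becomes strictly greater than $1$, while the constraints $\E e^{aX}\le1$ and $\E e^{bX}\le B$ are preserved. This requires slack in those constraints, which we create first by mixing with $\delta_{-L}$ as in Lemma~\ref{lem:unit-dual}; that step costs at most $-\log(1-\kappa)$ in relative entropy. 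With $\E e^{yX}>1$ strictly, we get $\gamma<y$. Taking $\varepsilon\downarrow0$ completes the lower bound, and combining both directions proves the stated formula.
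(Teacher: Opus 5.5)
Your decomposition and the interior argument are the paper's. The ``$\le$'' direction comes from feasibility of $P$ in $H_{a,b,B}(Q,y)$ for every $y\in(\gamma(P),b)$, followed by $y\downarrow\gamma(P)$. The ``$\ge$'' direction comes from $\E_Pe^{yX}\ge1\Rightarrow\gamma(P)\le y$. For the endpoint the paper does not perturb anything: it uses $\{P\in\mathcal P_{a,b,B}:\gamma(P)\ge b\}=\mathcal P_b$ and quotes Proposition~\ref{prop:unit-envelope-collapse} with $r=b$, which gives both directions of that term at once.

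\textbf{The endpoint lower bound fails as written.} If $m_b:=\E_Pe^{bX}<1$, a spike $s\delta_z$ that lifts the $b$-moment to exactly $1$ needs $s=(1-m_b)/(e^{bz}-m_b)$. With $z$ bounded, $s$ stays bounded away from $0$, so the cost $-\log(1-s)$ does not vanish; take $Q\in\mathcal P_b$ with $\E_Qe^{bX}=1/2$ and $P=Q$. You must let $z\to\infty$, and this is legitimate. The perturbed law has $\E e^{bX}=1\le B$, so it lies in $\mathcal P_b\subset\mathcal P_{a,b,B}$: the second constraint forbids spikes that break the $b$-moment bound, not spikes of mass of order $e^{-bz}$. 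It is also not a point mass, so $\gamma=b$ exactly. If $m_b=1$ and $P=\delta_0$ (where $\gamma=+\infty$), first mix with $\delta_{-L}$, as in the proof of Proposition~\ref{prop:unit-envelope-collapse}. Your ``alternative'' (near-optimal laws saturate $\E_Pe^{bX}=1$) is not an independent argument; it needs this same construction.

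\textbf{Your ``main obstacle'' is genuine and sharper than the paper.} The paper's proof asserts $\E_Pe^{yX}\ge1\Rightarrow\gamma(P)\le y$ without comment. For any $P$ that is not a point mass, $\Lambda_P$ is strictly convex on $[0,b]$, so $\Lambda_P(0)=\Lambda_P(y)=0$ forces $\Lambda_P>0$ on $(y,b]$. Among point masses in $\mathcal P_{a,b,B}$, only $\delta_0$ has $\E e^{yX}\ge1$, and it has $\gamma=+\infty$. This matters only when $Q=\delta_0$, since otherwise $D(Q\|\delta_0)=\infty$. For $Q=\delta_0$, however, the paper's inequality $(c-D(Q\|P))_+/\gamma(P)\ge(c-D(Q\|P))_+/y$ really does fail at $P=\delta_0$. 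Your mixing patch repairs this. After the $\delta_{-L}$ step, choosing $s$ so that $\E e^{aX}=1$ exactly already gives $\gamma=a<y$ by strict convexity. The bound $\E e^{bX}\le B$ survives for small $s$, and the entropy cost $-\log\bigl((1-s)(1-\kappa)\bigr)$ tends to $0$.
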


\begin{proof}
Split the defining supremum
\[
        f^*_{\mathcal P_{a,b,B}}(c,Q)
        =
        \sup_{P\in\mathcal P_{a,b,B}}
        \frac{(c-D(Q\|P))_+}{\gamma(P)}
\]
into the cases \(\gamma(P)<b\) and \(\gamma(P)\ge b\).

First suppose \(\gamma(P)<b\); we establish the interior identity
\begin{equation}\label{eq:interior-identity}
        \sup_{\substack{P\in\mathcal P_{a,b,B}\\ \gamma(P)<b}}
        \frac{(c-D(Q\|P))_+}{\gamma(P)}
        =
        \sup_{a<y<b}
        \frac{(c-H_{a,b,B}(Q,y))_+}{y}.
\end{equation}
To prove ``\(\le\)'', fix \(P\) with \(\gamma(P)<b\).  For every \(y\in(\gamma(P),b)\), one has
\(\E_Pe^{yX}>1\), so \(P\) is feasible in the definition of \(H_{a,b,B}(Q,y)\), and hence \(H_{a,b,B}(Q,y)\le D(Q\|P)\); equivalently \((c-H_{a,b,B}(Q,y))_+\ge (c-D(Q\|P))_+\).  This numerator bound holds for every \(y\in(\gamma(P),b)\), whereas the denominator \(y\) exceeds \(\gamma(P)\) and only recovers it in the limit \(y\downarrow\gamma(P)\).  Writing the target as such a limit,
\[
        \frac{(c-D(Q\|P))_+}{\gamma(P)}
        =
        \liminf_{y\downarrow\gamma(P)}
        \frac{(c-D(Q\|P))_+}{y}
        \le
        \liminf_{y\downarrow\gamma(P)}
        \frac{(c-H_{a,b,B}(Q,y))_+}{y}
        \le
        \sup_{a<y<b}
        \frac{(c-H_{a,b,B}(Q,y))_+}{y},
\]
where the first equality holds because the numerator does not depend on \(y\) and \(y\to\gamma(P)\), and the first inequality is the termwise numerator bound.  Taking the supremum over \(P\) with \(\gamma(P)<b\) yields ``\(\le\)'' in~\eqref{eq:interior-identity}.

For the reverse inequality, fix \(a<y<b\).  Every \(P\in\mathcal P_{a,b,B}\) with \(\E_Pe^{yX}\ge1\) has \(\gamma(P)\le y<b\), so
\[
        \frac{(c-D(Q\|P))_+}{\gamma(P)}\ge\frac{(c-D(Q\|P))_+}{y}.
\]
Take the supremum over these \(P\).  The left-hand side is then at most \(\sup_{P:\gamma(P)<b}(c-D(Q\|P))_+/\gamma(P)\); the right-hand side equals \((c-H_{a,b,B}(Q,y))_+/y\), because \(H_{a,b,B}(Q,y)=\inf\{D(Q\|P):P\in\mathcal P_{a,b,B},\,\E_Pe^{yX}\ge1\}\).  Taking the supremum over \(a<y<b\) yields ``\(\ge\)'' in~\eqref{eq:interior-identity}, completing the proof of the interior identity.

It remains to identify the endpoint contribution.  As recorded before the statement, the two constraints defining \(\mathcal P_{a,b,B}\) become automatic once \(\gamma(P)\ge b\), so that \(\{P\in\mathcal P_{a,b,B}:\gamma(P)\ge b\}=\mathcal P_b\), the unit envelope at exponent \(b\).  On this set the endpoint contribution is exactly the unit-envelope profile at exponent \(b\), which Proposition~\ref{prop:unit-envelope-collapse} evaluates in closed form:
\[
        \sup_{\substack{P\in\mathcal P_{a,b,B}\\ \gamma(P)\ge b}}
        \frac{(c-D(Q\|P))_+}{\gamma(P)}
        =
        f^*_{\mathcal P_b}(c,Q)
        =
        \frac{(c-H_b(Q))_+}{b}.
\]
Combining this with the interior identity~\eqref{eq:interior-identity} gives the claimed maximum, completing the proof.
\end{proof}

We next derive conservative lower bounds on these entropy projections.  The
bounds are dual in nature: for each fixed \(y\in(a,b)\), they involve finitely
many dual variables and a one-dimensional positivity constraint.  This
constraint is still explicit, but it is much simpler than the original
optimization over probability laws.
To make this concrete, we set, for \(a<y<b\) and
\(\zeta=(\lambda,\alpha,\beta,\eta)\in\mathbb R_+^4\),
\[
        s_{\zeta,y}(x)
        :=
        \lambda+
        \alpha e^{ax}+
        \beta e^{bx}-
        \eta e^{yx},
\]
and let
\[
        \mathcal N_y:=
        \{\zeta\in\mathbb R_+^4:
        s_{\zeta,y}(x)\ge0\text{ for all }x\in\mathbb R,\ \
        \lambda+\alpha+\beta B-\eta=1\}.
\]
Define the interior dual directly by its compact representation,
\[
        H^d_{a,b,B}(Q,y)
        :=
        \sup_{\zeta\in\mathcal N_y}
        \int\log s_{\zeta,y}\,dQ.
\]
For the endpoint we reuse the unit-envelope dual of Section~\ref{sec:unit-expon-envel} at exponent \(b\), writing \(H^d_b(Q)\) for that dual and \(s^b_\xi\) for its affine integrand; its logarithmic integral, like that of \(H^d_{a,b,B}\), is read in the extended-real sense introduced there.

\begin{lemma}[Compactness and duality for the interior projection]\label{lem:compact-interior}
For every \(a<y<b\) and every probability law \(Q\) on \(\mathbb R\), the set \(\mathcal N_y\) is compact and convex, and \(H^d_{a,b,B}(Q,y)\le H_{a,b,B}(Q,y)\).  If moreover \(\E_Q e^{bX}<\infty\), then \(H^d_{a,b,B}(Q,y)=H_{a,b,B}(Q,y)\).
\end{lemma}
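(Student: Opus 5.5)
The plan follows the three-part structure of Lemma~\ref{lem:unit-dual}: compactness, then weak duality, then relaxation plus strong duality. Only the bookkeeping changes: there are now four multipliers, and one constraint is reversed.

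\emph{Compactness and convexity.} Convexity is immediate. Both the pointwise constraints \(s_{\zeta,y}(x)\ge0\) and the normalization \(\lambda+\alpha+\beta B-\eta=1\) are linear in \(\zeta\), and an intersection of closed half-spaces with a hyperplane is closed and convex. Boundedness is the main point. I will first show that \(\eta\) is controlled by \(\lambda,\alpha,\beta\). At \(x=0\), nonnegativity gives \(\eta\le\lambda+\alpha+\beta\). Substituting into the normalization yields
\[
1=\lambda+\alpha+\beta B-\eta\ge\beta(B-1)\ge0 .
\]
This bounds \(\beta\le1/(B-1)\), using \(B>1\). To bound \(\lambda\) and \(\alpha\), I will use a better evaluation point. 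Since \(e^{yx}\) dominates \(e^{ax}\) and \(1\) as \(x\to+\infty\), but is dominated by \(e^{bx}\), a single point does not suffice directly. Instead I will combine the normalization with nonnegativity at a fixed \(x_0<0\) and a fixed \(x_1>0\). Writing \(\eta=\lambda+\alpha+\beta B-1\) and inserting it into \(s_{\zeta,y}(x_1)\ge0\) gives
\[
\lambda(1-e^{yx_1})+\alpha(e^{ax_1}-e^{yx_1})+\beta(e^{bx_1}-Be^{yx_1})+e^{yx_1}\ge0 .
\]
The first two coefficients are negative because \(x_1>0\) and \(a<y\). With \(\beta\) already bounded, this yields \(\lambda c_1+\alpha c_2\le C\) for positive constants \(c_1,c_2\), which bounds \(\lambda\) and \(\alpha\). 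Then \(\eta\) is bounded via the normalization, so \(\mathcal N_y\) is compact. The set is nonempty: \(\zeta=(1,0,0,0)\) lies in it.

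\emph{Weak duality.} Let \(P\in\mathcal P_{a,b,B}\) with \(\E_Pe^{yX}\ge1\), and let \(\zeta\in\mathcal N_y\). Then
\[
\E_Ps_{\zeta,y}\le\lambda+\alpha+\beta B-\eta=1,
\]
and \(s_{\zeta,y}\ge0\) makes it admissible in the entropy variational inequality. That inequality gives \(D(Q\|P)\ge\int\log s_{\zeta,y}\,dQ\), and taking the supremum and infimum proves \(H^d\le H\).

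\emph{Strong duality.} Assume \(\E_Qe^{bX}<\infty\). I will first relax to nonnegative measures with \(\int dP\le1\) and show the relaxation does not change the value. This is the main obstacle. In Lemma~\ref{lem:unit-dual}, topping up the missing mass with \(\delta_{-L}\) was harmless. Here it must keep \(\E e^{yX}\ge1\) while respecting \(\E e^{aX}\le1\) and \(\E e^{bX}\le B\). Point masses at a location \(z\) with \(a\)-level weight kept small can raise the \(y\)-moment while controlling the \(b\)-moment, because the \(y\)-moment is still subdominant to the \(b\)-moment.

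For the Lagrangian step, the relaxed program minimizes a convex functional under four affine constraints. Three of them are of the form \(\le\): mass, the \(a\)-moment and the \(b\)-moment. The fourth, \(\int e^{yX}dP\ge1\), is also affine. Slater's condition requires a strictly feasible point, which I will take as \(tQ\) plus a suitable small spike at a moderate \(z>0\). If Slater fails, for instance because \(H=+\infty\) or the program is infeasible, that case needs separate handling. The boundary behaviour is delicate: the constraint set may be empty, giving \(H=+\infty\), and the dual must then be shown unbounded.

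Given Slater, I follow Lemma~\ref{lem:unit-dual} exactly. The inner infimum over \(P\ge0\) of \(D(Q\|P)+\int s_{\zeta,y}\,dP\) is \(-\infty\) unless \(s_{\zeta,y}\ge0\) everywhere; otherwise one places singular mass where \(s_{\zeta,y}<0\). This produces the positivity constraint in \(\mathcal N_y\). When \(s_{\zeta,y}\ge0\), the infimum is attained at \(dP=dQ/s_{\zeta,y}\) with value \(1+\int\log s_{\zeta,y}\,dQ\). The dual therefore reads
\[
\sup_{\zeta\ge0,\ s_{\zeta,y}\ge0}\Bigl[\int\log s_{\zeta,y}\,dQ+1-(\lambda+\alpha+\beta B-\eta)\Bigr].
\]
Optimizing along rays \(\zeta\mapsto t\zeta\) normalizes the affine term to \(1\) whenever it is positive. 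Rays on which it is nonpositive are excluded: there, scaling up would make the dual value \(+\infty\), contradicting finiteness of the primal. The remaining normalized problem is exactly \(H^d_{a,b,B}(Q,y)\).
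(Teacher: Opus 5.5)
Your overall route is the paper's: weak duality from the entropy variational inequality, relaxation to subprobabilities, Lagrange duality at a Slater point, the Lebesgue-decomposition evaluation of the inner infimum with singular mass forcing \(s_{\zeta,y}\ge0\), and normalization along rays. Weak duality is fine. Your compactness argument is a valid variant of the paper's. The paper uses a single \(x_0>0\) with \(e^{(b-y)x_0}<B\), at which \(\lambda,\alpha,\beta\) all enter \(s_{\zeta,y}(x_0)\) with negative coefficients once \(\eta\) is eliminated; you instead get \(\beta\le1/(B-1)\) from \(x=0\) and then use any \(x_1>0\).

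The gap is in strong duality, first at the relaxation identity \(\widetilde H_{a,b,B}(Q,y)=H_{a,b,B}(Q,y)\). You flag it as the main obstacle but leave it as a heuristic. Topping up is not what threatens the reversed constraint, since mass at \(-L\) only raises \(\int e^{yX}dP\). The real problem is that a feasible subprobability may have \(\int e^{aX}dP=1\). Slack must then be opened by contracting to \((1-\theta)P\), and that contraction breaks \(\int e^{yX}dP\ge1\). The missing device is to mix in a strictly feasible \emph{probability} law. For \(x_0\in(0,\log B/(b-y))\) the window \(e^{-yx_0}<\kappa<\min\{e^{-ax_0},Be^{-bx_0}\}\) is nonempty, and \(R=\kappa\,\delta_{x_0}+(1-\kappa)\delta_{-L'}\) with \(L'\) large has \(\int e^{aX}dR<1\), \(\int e^{bX}dR<B\) and \(\int e^{yX}dR>1\). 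Then
\[
P_\theta=(1-\theta)P+\theta R+(1-\theta)\Bigl(1-\textstyle\int dP\Bigr)\delta_{-L}
\]
is a probability law with the following properties. Its \(y\)-moment is at least \((1-\theta)+\theta\int e^{yX}dR>1\). Its \(a\)- and \(b\)-moments have slack \(\theta(1-\int e^{aX}dR)\) and \(\theta(B-\int e^{bX}dR)\), which absorb the far-left contributions once \(L\) is large. Finally \(D(Q\|P_\theta)\le D(Q\|P)-\log(1-\theta)\), and letting \(\theta\downarrow0\) gives the identity.

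The second gap is that you name the right Slater point but never verify it, and the hypothesis \(\E_Qe^{bX}<\infty\) is never used. It is needed exactly here. The spike \(\kappa\,\delta_{x_0}\) satisfies all four constraints strictly; note its weight cannot be arbitrarily small, since it must exceed \(e^{-yx_0}\). Adding \(tQ\) raises the mass, \(a\)- and \(b\)-constraint values by \(t\), \(t\E_Qe^{aX}\) and \(t\E_Qe^{bX}\). These are finite because \(\E_Qe^{bX}<\infty\) and vanish as \(t\downarrow0\), while adding \(tQ\) only helps the \(y\)-constraint. Also \(D(Q\|P_S)\le D(Q\|tQ)=-\log t\).

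Hence, under the hypothesis, the primal is always feasible with finite value, and the ``Slater fails'' or \(H=+\infty\) case you defer to separate handling cannot occur. This matters for your last step. You discard rays with \(\lambda+\alpha+\beta B-\eta\le0\) by appealing to finiteness of the primal, and that finiteness is only available once this verification is done.
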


\begin{proof}
\emph{Weak duality.}  Let \(\zeta\in\mathcal N_y\) and let \(P\) be feasible in the definition of \(H_{a,b,B}(Q,y)\), so \(\E_P e^{aX}\le1\), \(\E_P e^{bX}\le B\), and \(\E_P e^{yX}\ge1\).  Since \(\alpha,\beta,\eta\ge0\),
\[
        \E_P s_{\zeta,y}(X)
        =\lambda+\alpha\E_P e^{aX}+\beta\E_P e^{bX}-\eta\E_P e^{yX}
        \le\lambda+\alpha+\beta B-\eta=1.
\]
The entropy variational inequality \(D(Q\|P)\ge\int\log h\,dQ-\log\int h\,dP\), applied to \(h=s_{\zeta,y}\), gives \(D(Q\|P)\ge\int\log s_{\zeta,y}\,dQ-\log\E_P s_{\zeta,y}\ge\int\log s_{\zeta,y}\,dQ\), since \(\log\E_P s_{\zeta,y}\le\log1=0\).  Taking the infimum over \(P\) and the supremum over \(\zeta\in\mathcal N_y\) proves \(H^d_{a,b,B}(Q,y)\le H_{a,b,B}(Q,y)\).

\emph{Compactness.}  For \(\zeta\in\mathcal N_y\) we have \(\lambda,\alpha,\beta\ge0\) by definition.  Choose \(x_0>0\) with \(e^{(b-y)x_0}<B\).  Writing \(\eta=\lambda+\alpha+\beta B-1\),
\[
        s_{\zeta,y}(x_0)
        =e^{yx_0}-\lambda(e^{yx_0}-1)-\alpha(e^{yx_0}-e^{ax_0})-\beta(Be^{yx_0}-e^{bx_0}),
\]
where the three coefficients after the minus signs are strictly positive.  With \(\lambda,\alpha,\beta\ge0\) and \(s_{\zeta,y}(x_0)\ge0\), this bounds \(\lambda,\alpha,\beta\), and the defining equation \(\eta=\lambda+\alpha+\beta B-1\) then bounds \(\eta\); thus \(\mathcal N_y\) is bounded.  It is closed and convex, being defined by the linear equation \(\lambda+\alpha+\beta B-\eta=1\) together with the inequalities \(s_{\zeta,y}(x)\ge0\), each of which is linear in \(\zeta\).  Hence \(\mathcal N_y\) is compact and convex.

\emph{Relaxation.}  As in Lemma~\ref{lem:unit-dual}, relax the primal to nonnegative measures under an inequality normalization,
\[
        \widetilde H_{a,b,B}(Q,y):=\inf\Big\{D(Q\|P):P\ge0,\ Q\ll P,\ \textstyle\int dP\le1,\ \int e^{aX}dP\le1,\ \int e^{bX}dP\le B,\ \int e^{yX}dP\ge1\Big\},
\]
and show \(\widetilde H_{a,b,B}(Q,y)=H_{a,b,B}(Q,y)\).  Clearly ``\(\le\)'' holds.  For the reverse, scaling alone would destroy the slack in the lower constraint \(\int e^{yX}dP\ge1\), so we mix with a strictly feasible probability law.  Fix \(x_0\in(0,\tfrac{\log B}{b-y})\) and \(c\in(e^{-yx_0},e^{-ax_0})\) with \(c\,e^{bx_0}<B\); for \(L'\) large the probability law \(R:=c\,\delta_{x_0}+(1-c)\,\delta_{-L'}\) then satisfies \(\int e^{aX}dR<1\), \(\int e^{bX}dR<B\) and \(\int e^{yX}dR>1\), all strictly.  Given \(P\) feasible for \(\widetilde H_{a,b,B}\) and \(\theta\in(0,1)\), define
\[
        P_\theta:=(1-\theta)P+\theta R+(1-\theta)\big(1-\textstyle\int dP\big)\delta_{-L};
\]
its total mass is \((1-\theta)\int dP+\theta+(1-\theta)(1-\int dP)=1\), the third term carrying exactly the normalization deficit of the subprobability \(P\), so \(P_\theta\) is a probability law.  We check that its three moment constraints hold for every \(\theta\in(0,1)\), on taking \(L=L(P,\theta)\) large.  The lower bound survives the contraction by \(1-\theta\): indeed \(\int e^{yX}dP_\theta\ge(1-\theta)\int e^{yX}dP+\theta\int e^{yX}dR>1\), for any \(L\).  The two upper bounds are instead the ones the far-left mass at \(-L\) pushes on, contributing \((1-\theta)(1-\int dP)e^{-aL}\) and \((1-\theta)(1-\int dP)e^{-bL}\) to the respective moments; since \(R\) is strictly feasible in both, choosing \(L\) large enough that these contributions fall within its slack keeps \(\int e^{aX}dP_\theta\le1\) and \(\int e^{bX}dP_\theta\le B\).  Hence \(P_\theta\in\mathcal P_{a,b,B}\) with \(\int e^{yX}dP_\theta\ge1\).  Since \(P_\theta\ge(1-\theta)P\), \(D(Q\|P_\theta)\le D(Q\|P)-\log(1-\theta)\), a bound independent of \(L\); the infimum over \(P\) followed by \(\theta\downarrow0\) gives \(H_{a,b,B}\le\widetilde H_{a,b,B}\).

\emph{Strong duality.}  Assume \(\E_Q e^{bX}<\infty\).  The relaxed program minimizes the convex functional \(P\mapsto D(Q\|P)\) subject to four affine inequality constraints valued in \(\mathbb R^4\), and \(P_S=c\,\delta_{x_0}+tQ\) (with \(x_0,c\) as above) is a Slater point: \(c\,\delta_{x_0}\) satisfies the four constraints strictly, including \(\int dP_S<1\) since \(c<1\).  Adding \(tQ\) secures \(Q\ll P_S\) with \(D(Q\|P_S)\le-\log t<\infty\); and since \(\E_Q e^{bX}<\infty\) (and hence \(\E_Q e^{aX},\E_Q e^{yX}<\infty\) as \(a,y<b\)) the four constraint values shift by the finite amounts \(t\), \(t\,\E_Q e^{aX}\), \(t\,\E_Q e^{bX}\), \(t\,\E_Q e^{yX}\), which vanish as \(t\downarrow0\), so a small \(t\) keeps all four inequalities strict.  By the Lagrange duality theorem \cite[Section~8.6]{luenberger1969}, the dual value is attained and equals the primal,
\[
        H_{a,b,B}(Q,y)=\max_{\lambda,\alpha,\beta,\eta\ge0}\Big\{\inf_{P\ge0}\big[D(Q\|P)+\textstyle\int s_{\zeta,y}\,dP\big]-(\lambda+\alpha+\beta B-\eta)\Big\}.
\]
As in Lemma~\ref{lem:unit-dual}, decomposing \(P\) relative to \(Q\) reduces the inner infimum to \(\int(s_{\zeta,y}\,w-\log w)\,dQ+\int s_{\zeta,y}\,dP^\perp\).  The dual variable \(\zeta\) ranges over the orthant \(\mathbb R_+^4\), on which \(s_{\zeta,y}\) may become negative; wherever \(s_{\zeta,y}(x)<0\), placing singular mass at \(x\) sends \(\int s_{\zeta,y}\,dP^\perp\to-\infty\), so the inner infimum, and hence the dual objective, is then \(-\infty\).  This enforces the pointwise positivity \(s_{\zeta,y}\ge0\): only \(\zeta\) satisfying it can be dual-optimal, so the dual variables may be selected from \(\mathcal N_y\), and for them the infimum equals \(\int\log s_{\zeta,y}\,dQ+1\) as before.  The dual is therefore \(\max_{\zeta\ge0,\ s_{\zeta,y}\ge0}\big[\int\log s_{\zeta,y}\,dQ+1-N\big]\) with \(N:=\lambda+\alpha+\beta B-\eta\).  This maximum is finite (it equals \(H_{a,b,B}(Q,y)\)), so any maximizer has \(N>0\): on a ray with \(N\le0\) and \(\int\log s_{\zeta,y}\,dQ>-\infty\), the scaling \(\zeta\mapsto t\zeta\) with \(t\to\infty\) would send the objective to \(+\infty\).  Optimizing the scale on a ray with \(N>0\) removes the affine term and normalizes \(N=1\), leaving \(\sup_{\zeta\in\mathcal N_y}\int\log s_{\zeta,y}\,dQ=H^d_{a,b,B}(Q,y)\).  Hence \(H^d_{a,b,B}(Q,y)=H_{a,b,B}(Q,y)\).
\end{proof}

Replacing each projection in Proposition~\ref{prop:two-level-envelope-profile} by its dual lower bound yields the \emph{conservative dual profile}
\[
        f_{\rm cd}(c,Q)
        :=
        \max\left\{
        \sup_{a<y<b}\frac{(c-H^d_{a,b,B}(Q,y))_+}{y},
        \frac{(c-H^d_b(Q))_+}{b}
        \right\}.
\]
Empirical laws \(Q_n\) have finite support, hence \(\E_{Q_n}e^{bX}<\infty\), so the equality parts of Lemmas~\ref{lem:compact-interior} and~\ref{lem:unit-dual}, with Proposition~\ref{prop:two-level-envelope-profile}, yield \(f_{\rm cd}(c,Q_n)=f^*_{\mathcal P_{a,b,B}}(c,Q_n)\): there the conservative dual rule coincides with the exact decision-centric profile.  With the pointwise lower bound of Section~\ref{sec:safe-profiles}, it is then optimal among regular rules.
By the weak-duality bounds \(H^d_{a,b,B}(Q,y)\le H_{a,b,B}(Q,y)\) (Lemma~\ref{lem:compact-interior}) and \(H^d_b(Q)\le H_b(Q)\) (Lemma~\ref{lem:unit-dual} at exponent \(b\)), Proposition~\ref{prop:two-level-envelope-profile} gives \(f_{\rm cd}(c,Q)\ge f^*_{\mathcal P_{a,b,B}}(c,Q)\) for every \(c>0\) and every probability law \(Q\); thus \(f_{\rm cd}\) dominates \(f^*\) and is a natural candidate.  We now show it is indeed safe, circumventing the technical difficulties of Sanov's theorem with a direct proof.

\begin{proposition}[Safety of the conservative dual rule]
\label{prop:full-dual-relaxed-safe}
For every \(c>0\) and every \(P\in\mathcal P_{a,b,B}\),
\[
        \limsup_{n\to\infty}
        \frac1n\log \Prob_P\{M>n f_{\rm cd}(c,Q_n)\}
        \le -c .
\]
Consequently \(C_n=n f_{\rm cd}(c,Q_n)\) is safe at exponent \(c\) over
\(\mathcal P_{a,b,B}\).
\end{proposition}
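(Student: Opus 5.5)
The plan is to bypass Sanov's theorem entirely and work with the Lundberg bound together with the finite-dimensional duals. Fix \(P\in\mathcal P_{a,b,B}\) and write \(\gamma=\gamma(P)\); note \(\gamma\ge a\) since \(\E_Pe^{aX}\le1\). By independence and \eqref{eq:lundberg-bound},
\[
\Prob_P\{M>nf_{\rm cd}(c,Q_n)\}\le\E_P\exp\{-n\gamma f_{\rm cd}(c,Q_n)\}.
\]
I will lower-bound \(f_{\rm cd}\) by a single term of its defining maximum, chosen according to \(P\), so that only one dual projection needs to be controlled.

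\emph{Choosing the term.} If \(\gamma<b\), fix any \(y\in(\gamma,b)\). Then \(\E_Pe^{yX}>1\), so \(P\) is feasible for \(H_{a,b,B}(\cdot,y)\), and \(\gamma f_{\rm cd}(c,Q_n)\ge\theta\,(c-H^d_{a,b,B}(Q_n,y))_+\) with \(\theta=\gamma/y<1\). If \(\gamma\ge b\), then \(P\in\mathcal P_b\), and I use the endpoint term with \(\theta=\gamma/b\ge1\).

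\emph{The martingale-type identity.} In both cases the key fact is the following. For every dual point \(\zeta\) in \(\mathcal N_y\) (respectively \(\xi\in\mathcal N\)), the computation in the weak-duality step of Lemma~\ref{lem:compact-interior} (respectively Lemma~\ref{lem:unit-dual}) gives \(\E_Ps_{\zeta,y}(X)\le1\). Hence, by independence of the \(X_i\),
\[
\E_P\exp\Big\{n\int\log s_{\zeta,y}\,dQ_n\Big\}=\big(\E_Ps_{\zeta,y}(X)\big)^n\le1,
\]
so Markov's inequality yields \(\Prob_P\{\int\log s_{\zeta,y}\,dQ_n\ge h\}\le e^{-nh}\) for each fixed \(\zeta\).

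\emph{Exponential aggregation (the main obstacle).} The step I expect to be hardest is replacing the supremum over the compact set \(\mathcal N_y\) by a maximum over a finite net, uniformly in the data. The difficulty is that \(\log s_{\zeta,y}\) is unbounded and \(s_{\zeta,y}\) may vanish, so \(\zeta\mapsto\int\log s_{\zeta,y}\,dQ_n\) is not uniformly continuous. I would first handle this by smoothing toward an interior point.
\begin{itemize}
\item Pick \(\zeta_0\in\mathcal N_y\) with \(\lambda_0,\beta_0>0\) and \(\eta_0\) small, so that \(s_{\zeta_0,y}(x)\ge\kappa(1+e^{bx})\) for some \(\kappa>0\).
\item Set \(\zeta_\varepsilon=(1-\varepsilon)\zeta+\varepsilon\zeta_0\in\mathcal N_y\), using convexity of \(\mathcal N_y\). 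Linearity in \(\zeta\) gives \(s_{\zeta_\varepsilon,y}\ge(1-\varepsilon)s_{\zeta,y}\), hence \(\int\log s_{\zeta_\varepsilon,y}\,dQ_n\ge\int\log s_{\zeta,y}\,dQ_n+\log(1-\varepsilon)\).
\item Also \(s_{\zeta_\varepsilon,y}\ge\varepsilon\kappa(1+e^{bx})\), while \(|s_{\zeta',y}-s_{\zeta_\varepsilon,y}|\le|\zeta'-\zeta_\varepsilon|(1+e^{ax}+e^{bx}+e^{yx})\le 3|\zeta'-\zeta_\varepsilon|(1+e^{bx})\). So the ratio \(s_{\zeta',y}/s_{\zeta_\varepsilon,y}\) is uniformly close to \(1\) in \(x\) once \(\zeta'\) lies in a small enough neighbourhood.
\item By compactness, the set \(\{\zeta_\varepsilon:\zeta\in\mathcal N_y\}\) is covered by finitely many, say \(K_\varepsilon\), such neighbourhoods, centred at net points \(\zeta_1,\dots,\zeta_{K_\varepsilon}\in\mathcal N_y\).
\end{itemize}
This yields, for every realization of the data,
\[
H^d_{a,b,B}(Q_n,y)\le\max_{k\le K_\varepsilon}\int\log s_{\zeta_k,y}\,dQ_n+\delta_\varepsilon,
\qquad \delta_\varepsilon\to0 .
\]
Combined with the union bound, this gives \(\Prob_P\{H^d_{a,b,B}(Q_n,y)\ge h\}\le K_\varepsilon e^{-n(h-\delta_\varepsilon)}\). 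The endpoint dual \(H^d_b\) is handled identically, using \(\xi_0=(1/2,1/2)\).

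\emph{Concluding.} Partition the range \([0,c]\) of \(h\) into a finite grid and bound the expectation \(\E_P\exp\{-n\theta(c-H^d)_+\}\) by a finite sum of terms. This gives
\[
\limsup_n\frac1n\log\Prob_P\{M>nf_{\rm cd}(c,Q_n)\}\le-\inf_{h\ge0}\{h+\theta(c-h)_+\}+\delta_\varepsilon+o_{\rm grid}(1).
\]
The infimum equals \(\min\{1,\theta\}\,c\). In the endpoint case this is \(c\). In the interior case it is \(\gamma c/y\), and the preceding bounds hold for every \(y\in(\gamma,b)\). Letting the grid mesh and \(\varepsilon\) go to zero, and then \(y\downarrow\gamma\), gives the claimed bound \(-c\).
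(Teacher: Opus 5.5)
Your proof is correct. It shares the paper's outer structure: the conditional Lundberg bound, a single term of $f_{\rm cd}$ with $\theta=\gamma/y$ for $y\in(\gamma,b)$ (or the endpoint term when $\gamma\ge b$), the per-point bound $\E_P s_{\zeta,y}(X)\le1$ from weak duality, and the final limit $y\downarrow\gamma$.

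The genuine difference is how you control the supremum over the dual set. The paper uses the exp-concave aggregation inequality of Agrawal--Juneja--Glynn. Since $\zeta\mapsto s_{\zeta,y}(x)^\theta$ is concave for $\theta\le1$, it bounds $e^{n\theta H^d_{a,b,B}(Q_n,y)}$ pathwise by $e(n+1)^4$ times a uniform average over $\mathcal N_y$ of $\prod_t s_{\zeta,y}(X_t)^\theta$. Fubini, factorization and Jensen then give the explicit finite-$n$ bound $\E_P e^{-n\gamma f_{\rm cd}(c,Q_n)}\le e(n+1)^4e^{-n\theta c}$ directly.

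You instead use a classical $\varepsilon$-net. Mixing toward an interior dual point with $s_{\zeta_0,y}\ge\kappa(1+e^{bx})$ costs only $\log(1-\varepsilon)$. Because all four basis functions are dominated by a multiple of $1+e^{bx}$, this makes the log-integrands uniformly equicontinuous in ratio form. A finite net plus Chernoff and a union bound then give the tail estimate $\Prob_P\{H^d\ge h\}\le K_\varepsilon e^{-n(h-\delta_\varepsilon)}$, which you integrate over a grid to obtain the exponent $\min\{1,\theta\}c$.

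What each route buys:
\begin{itemize}
\item Your route is elementary and self-contained. It needs no external lemma, no uniform measure on the lower-dimensional set $\mathcal N_y$, and no Jensen step requiring $\theta\le1$ (here $\theta$ enters only in the final optimization). It also yields more information, namely a full tail bound for the dual statistic.
\item Its cost is non-explicit constants $K_\varepsilon$, the slack $\delta_\varepsilon$, and extra limits in the grid mesh and $\varepsilon$.
\item The paper's mixture argument is shorter and gives a clean nonasymptotic bound with only a polynomial prefactor.
\end{itemize}
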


\begin{proof}
Evaluated at the empirical law \(Q_n=n^{-1}\sum_{t=1}^n\delta_{X_t}\), the two dual values are the empirical suprema
\begin{equation}\label{eq:empirical-duals}
        H^d_{a,b,B}(Q_n,y)=\sup_{\zeta\in\mathcal N_y}\frac1n\sum_{t=1}^n\log s_{\zeta,y}(X_t),
        \qquad
        H^d_b(Q_n)=\sup_{\xi\in\mathcal N}\frac1n\sum_{t=1}^n\log s^b_\xi(X_t),
\end{equation}
over the compact convex sets \(\mathcal N_y\) (Lemma~\ref{lem:compact-interior}) and \(\mathcal N\).  Since the historical sample is independent of the future maximum \(M\), Lundberg's inequality \eqref{eq:lundberg-bound} conditionally on \(Q_n\), followed by taking expectations, gives with \(\gamma=\gamma(P)\)
\[
        \Prob_P\{M>n f_{\rm cd}(c,Q_n)\}
        \le
        \E_P\,e^{-n\gamma f_{\rm cd}(c,Q_n)} .
\]

The integrand \(s_{\zeta,y}(x)=\lambda+\alpha e^{ax}+\beta e^{bx}-\eta e^{yx}\) is \emph{affine} in \(\zeta\), so for every \(x\) and every \(\theta\in(0,1]\) the map \(\zeta\mapsto s_{\zeta,y}(x)^\theta\) is concave; equivalently \(\zeta\mapsto\theta\log s_{\zeta,y}(x)\) is exp-concave.  We may therefore control the empirical suprema by the aggregation inequality \cite[Lemma~E.1]{agrawal-juneja-glynn-heavy}: if \(g_1,\dots,g_n\) are exp-concave on a compact convex \(\Lambda\subseteq\mathbb R^d\) and \(\delta_\Lambda\) is the uniform law on \(\Lambda\), i.e., the normalized Hausdorff measure on its affine hull, then, on every sample path,
\[
        \max_{\lambda\in\Lambda}\sum_{t=1}^n g_t(\lambda)
        \le
        \log\E_{\lambda\sim\delta_\Lambda}e^{\sum_{t=1}^n g_t(\lambda)}+d\log(n+1)+1 .
\]

\emph{Case \(\gamma<b\).}  Fix \(y\in(\gamma,b)\) and put \(\theta=\gamma/y\in(0,1)\).  Since \(f_{\rm cd}(c,Q)\ge(c-H^d_{a,b,B}(Q,y))_+/y\),
\[
        e^{-n\gamma f_{\rm cd}(c,Q_n)}
        \le
        e^{-n\theta(c-H^d_{a,b,B}(Q_n,y))_+}
        \le
        e^{-n\theta c}\,e^{\,n\theta H^d_{a,b,B}(Q_n,y)} .
\]
We now bound \(\E_P\,e^{\,n\theta H^d_{a,b,B}(Q_n,y)}\). By the empirical representation \eqref{eq:empirical-duals} of the dual, \(n\theta H^d_{a,b,B}(Q_n,y)=\sup_{\zeta\in\mathcal N_y}\sum_{t=1}^n g_t(\zeta)\), where \(g_t(\zeta):=\theta\log s_{\zeta,y}(X_t)\) is exp-concave, as noted above.
Apply the aggregation inequality for \(\Lambda=\mathcal N_y\subseteq\mathbb R^4\) to get
\[
        e^{\,n\theta H^d_{a,b,B}(Q_n,y)}
        \le
        e\,(n+1)^4\,\E_{\zeta\sim\delta_{\mathcal N_y}}e^{\sum_{t=1}^n g_t(\zeta)} .
\]
For each fixed \(\zeta\) the exponential of the sum factors as a product, \(e^{\sum_{t=1}^n g_t(\zeta)}=\prod_{t=1}^n s_{\zeta,y}(X_t)^\theta\).  Taking \(\E_P\) on both sides and interchanging it with the average over \(\zeta\) by the Fubini--Tonelli theorem gives
\[
        \E_P\,e^{\,n\theta H^d_{a,b,B}(Q_n,y)}
        \le
        e\,(n+1)^4\,\E_{\zeta\sim\delta_{\mathcal N_y}}\E_P\prod_{t=1}^n s_{\zeta,y}(X_t)^\theta .
\]

Finally, since \(X_1,\dots,X_n\) are i.i.d.\ under \(P\), the inner expectation factorizes for each fixed \(\zeta\), \(\E_P\prod_{t=1}^n s_{\zeta,y}(X_t)^\theta=(\E_P\, s_{\zeta,y}(X)^\theta)^n\), whence
\[
        \E_P\,e^{\,n\theta H^d_{a,b,B}(Q_n,y)}
        \le
        e\,(n+1)^4\,\E_{\zeta\sim\delta_{\mathcal N_y}}\big(\E_P\, s_{\zeta,y}(X)^\theta\big)^n .
\]
It remains to bound the per-\(\zeta\) moment.  For \(\zeta\in\mathcal N_y\),
\[
        \E_P\, s_{\zeta,y}(X)
        =\lambda+\alpha\E_Pe^{aX}+\beta\E_Pe^{bX}-\eta\E_Pe^{yX}
        \le\lambda+\alpha+\beta B-\eta=1,
\]
using \(\E_Pe^{aX}\le1\), \(\E_Pe^{bX}\le B\), and \(y>\gamma(P)\Rightarrow\E_Pe^{yX}\ge1\); Jensen's inequality (\(\theta<1\)) then gives \(\E_P s_{\zeta,y}(X)^\theta\le(\E_P s_{\zeta,y}(X))^\theta\le1\).  Combining the displays,
\[
        \E_P\,e^{-n\gamma f_{\rm cd}(c,Q_n)}\le e\,(n+1)^4\,e^{-n\theta c}.
\]
The polynomial prefactor is \(e^{o(n)}\), so \(\limsup_{n\to\infty}\frac1n\log\Prob_P\{M>nf_{\rm cd}(c,Q_n)\}\le-\theta c\); letting \(y\downarrow\gamma\), so that \(\theta\uparrow1\), gives the bound \(-c\).

\emph{Case \(\gamma\ge b\).}  Here \(f_{\rm cd}(c,Q)\ge(c-H^d_b(Q))_+/b\) and \(\gamma/b\ge1\), so \(e^{-n\gamma f_{\rm cd}(c,Q_n)}\le e^{-nc}e^{\,nH^d_b(Q_n)}\).  The same argument with \(g_t(\xi)=\log s^b_\xi(X_t)\), \(\Lambda=\mathcal N\subseteq\mathbb R^2\) (so \(d=2\)), and
\[
        \E_P\,s^b_\xi(X)=\lambda+\beta\E_Pe^{bX}\le\lambda+\beta=1
        \qquad(\gamma(P)\ge b\Rightarrow\E_Pe^{bX}\le1)
\]
yields \(\E_P\,e^{-n\gamma f_{\rm cd}(c,Q_n)}\le e\,(n+1)^2\,e^{-nc}\), whence \(\limsup_{n\to\infty}\frac1n\log\Prob_P\{M>nf_{\rm cd}(c,Q_n)\}\le-c\).
\end{proof}

\begin{remark}[Tractability of the conservative dual]\label{rem:tractability}
Computing \(f_{\rm cd}(c,Q)\) reduces to a one-dimensional search over \(y\in(a,b)\) of the two dual values \(H^d_{a,b,B}(Q,y)\) and \(H^d_b(Q)\).  Each is a \emph{concave} maximization (\(\zeta\mapsto\int\log s_{\zeta,y}\,dQ\) over \(\mathcal N_y\subseteq\mathbb R^4\) and \(\xi\mapsto\int\log s^b_\xi\,dQ\) over \(\mathcal N\subseteq\mathbb R^2\)), the objectives being concave because \(s_{\zeta,y}\) and \(s^b_\xi\) are affine in the dual variables and the logarithm a concave function.  The endpoint set \(\mathcal N\) is a simplex, so the only nontrivial feature is the semi-infinite positivity constraint \(s_{\zeta,y}(x)\ge0\) for all \(x\in\mathbb R\) that defines \(\mathcal N_y\).

This constraint carries an efficient {separation oracle}.  Substituting \(u=e^x\) and dividing by \(u^y>0\), positivity is equivalent to
\[
        \eta\le\inf_{u>0}\big\{\lambda u^{-y}+\alpha u^{a-y}+\beta u^{b-y}\big\},
\]
a scalar minimization whose minimizer \(u_\star\), at interior parameter values, solves the single equation \((b-y)\beta u^b=y\lambda+(y-a)\alpha u^a\).  One root-finding therefore decides membership: if the inequality holds then \(\zeta\in\mathcal N_y\); if it fails, then \(s_{\zeta,y}(x_\star)<0\) at \(x_\star=\log u_\star\), and since \(\zeta'\mapsto s_{\zeta',y}(x_\star)\) is affine, the halfspace \(\{\zeta':s_{\zeta',y}(x_\star)\ge0\}\), which contains \(\mathcal N_y\), separates \(\zeta\) from it.
\end{remark}

\section{Numerical illustration}
\label{sec:numerical-illustration}

We include a small experiment to illustrate how the different capital-buffer rules behave at moderate sample sizes.
The benchmark distribution is
$P=N(-1,1)$
so that the true adjustment coefficient is $\gamma(P)=2$.
We compare three data-driven capital coefficients.

\paragraph{Empirical plug-in.} Here the statistic is the empirical distribution, \(Q_n=P_n\), and the coefficient is the plug-in coefficient of Section~\ref{sec:plugin},
\[
        f_{\rm emp}(c,Q_n):=\frac{c}{\gamma(P_n)},
\]
with the conventions adopted there.

\paragraph{Gaussian LDP.} Here the statistic is the pair \(Q_n=(\bar X_n,V_n)\) of sample mean and variance, and the coefficient \(f_{\rm Gaus}(c,Q_n)\) is obtained from the explicit Gaussian profile in Section~\ref{subsec:gaussian}.

\paragraph{Nonparametric conservative dual.} Here the statistic is again the empirical distribution, \(Q_n=P_n\), and the coefficient \(f_{\rm Nonp}(c,Q_n)=f_{\rm cd}(c,Q_n)\) is the rule of Section~\ref{subsec:two-level-envelope}. For it we use the two-level exponential envelope
\[
        a=1,\qquad b=4,\qquad B=2e^4,
\]
which is calibrated conservatively relative to the benchmark distribution.
In the implementation, the supremum over \(y\in(a,b)\) is evaluated on a uniform discretization with \(70\) points, and each inner concave maximization is solved with the separation oracle of Remark~\ref{rem:tractability}.

For a rule \(\rho\in\{\mathrm{emp},\mathrm{Gaus},\mathrm{Nonp}\}\) with capital coefficient \(f_\rho(c,Q_n)\),
we evaluate the expected Cram\'er--Lundberg upper bound
\[
        \mathcal B^\rho_{n,c}
        :=
        \E_P\exp\{-\gamma(P)n f_\rho(c,Q_n)\}.
\]
The reported performance measure is
\[
        -\frac{\log \mathcal B^\rho_{n,c}}{cn}.
\]
The target value is one. Values below one indicate that the corresponding rule does not attain the prescribed exponential safety level on this Cram\'er--Lundberg scale; values above one indicate conservatism.

In the experiment we take \(n=50\). The empirical plug-in and Gaussian LDP rules are estimated using \(50{,}000\) Monte Carlo replications per run. The nonparametric conservative dual rule is more expensive and is estimated using \(500\) Monte Carlo replications per run. We repeat each experiment three times independently and report the pointwise median over the three runs.

\begin{figure}[h]
        \centering
        \begin{tikzpicture}
        \definecolor{mplgreen}{HTML}{2CA02C}
        \begin{axis}[
                width=0.78\textwidth,
                height=0.55\textwidth,
                xlabel={$c$},
                ylabel={estimated / target exponent},
                xmin=0, xmax=0.1,
                xtick={0,0.02,0.04,0.06,0.08,0.1},
                legend pos=south west,
                legend cell align=left,
                grid=major,
                grid style={gray!20},
                clip=false,
        ]
        \addplot[blue,   line width=1.2pt] table[col sep=comma, x=c, y=emp]  {data/normalized_exponent_median.csv};
        \addlegendentry{$\rho=\mathrm{emp}$}
        \addplot[orange, line width=1.2pt] table[col sep=comma, x=c, y=gaus] {data/normalized_exponent_median.csv};
        \addlegendentry{$\rho=\mathrm{Gaus}$}
        \addplot[mplgreen,  line width=1.2pt] table[col sep=comma, x=c, y=nonp] {data/normalized_exponent_median.csv};
        \addlegendentry{$\rho=\mathrm{Nonp}$}
        \addplot[black!60, dashed, samples=2, domain=0:0.1, forget plot] {1};
        \draw[->, thick, black!70] (axis cs:0.103,1.005) -- (axis cs:0.103,1.10)
                node[midway, anchor=west, xshift=2pt]{\small conservative};
        \draw[->, thick, black!70] (axis cs:0.103,0.995) -- (axis cs:0.103,0.90)
                node[midway, anchor=west, xshift=2pt]{\small unsafe};
        \end{axis}
        \end{tikzpicture}
        \caption{
        Normalized finite-sample exponent
        \(-\log \mathcal B^\rho_{n,c}/(cn)\) for the empirical plug-in rule, the
        Gaussian LDP rule, and the nonparametric conservative dual rule.
        The benchmark distribution is \(P=N(-1,1)\), and \(n=50\).
        The dashed horizontal line marks the target value one.
        }
        \label{fig:numerical-normalized-exponent}
\end{figure}

Figure~\ref{fig:numerical-normalized-exponent} shows the expected qualitative ordering. The empirical plug-in rule is optimistic and falls below the target value. The Gaussian LDP rule is close to the target, reflecting that the data-generating distribution lies in the Gaussian model class used by the rule. The nonparametric conservative dual rule lies above the target. This is consistent with the theory: by strong duality the nonparametric rule evaluates the exact profile of the two-level envelope at the empirical law, and its conservatism reflects protection against the larger model class \(\mathcal P_{a,b,B}\).

\section{Conclusion}

We have developed a decision-centric large-deviations framework for choosing data-driven capital buffers in ruin models with an unknown increment law. Its central object is the profile \(f^*\), which couples the statistical cost of observing an atypical statistic with the ruin decay rate that the induced decision must control.
Within this framework we showed that the naive plug-in rule
fails to attain the prescribed safe logarithmic decay exponent, that the lower semicontinuous majorants of \(f^*\) are safe while regular rules falling strictly below \(f^*\) are, under mild conditions, unsafe, and that \(f^*\) admits explicit closed forms in three parametric models. For nonparametric classes we identified uniform integrability as the crux of well-posedness: a single exponential envelope degenerates, whereas a two-level envelope yields a genuinely data-dependent conservative dual rule \(f_{\rm cd}\) that we proved safe and, by strong duality, exact at empirical laws. A numerical illustration confirmed the expected ordering of the plug-in, Gaussian, and nonparametric rules relative to the target safety exponent.

\section*{Acknowledgments}

Bart P.G.\ van Parys gratefully acknowledges funding from NWO Vidi grant
VI.Vidi.243.021. The authors used OpenAI's ChatGPT and Anthropic's Claude for assistance 
with computations and copyediting.

\clearpage

\bibliographystyle{plain}
\bibliography{main}

\clearpage

\appendix

\section{Proofs of the parametric example profiles}\label{app:examples}

\begin{proof}[Proof of Proposition~\ref{prop:rademacher}]
Assume first \(q\in\mathcal Q_s=[0,1/2)\), the stable region.  For \(y>0\), set \(H_q(y)=\inf\{d(q\|p):0<p<1/2,\ \gamma(p)\le y\}\).  Since \(\gamma(p)\le y\) is equivalent to \(p\ge p_y:=1/(1+e^y)\), and since \(p\mapsto d(q\|p)\) is minimized at \(p=q\) (with \(\gamma(0)=+\infty\) when \(q=0\)),
\[
        H_q(y)=
        \begin{cases}
        d(q\|p_y),& 0<y<\gamma(q),\\[3pt]
        0,& y\ge \gamma(q).
        \end{cases}
\]
Since \(H_q(y)\) is precisely the inner infimum in the definition of \(c_{\rm crit}(q)\), continuity of \(p\mapsto d(q\|p)\) gives \(c_{\rm crit}(q)=\lim_{y\downarrow0}H_q(y)=d(q\|1/2)\), as asserted.
On the active interval \(0<y<\gamma(q)\),
\[
        H_q(y)
        =q\log q+(1-q)\log(1-q)+\log(1+e^y)-(1-q)y,
\]
and hence \(H_q'(y)=q-p_y\) and \(H_q''(y)=p_y(1-p_y)>0\).  Grouping the laws in Definition~\ref{def:fstar} by \(\gamma(p)\le y\) gives
$$f^*(c,q)=\sup_{y>0}(c-H_q(y))_+/y:$$
each \(p\) with \(\gamma(p)\le y\) satisfies \((c-d(q\|p))_+/y\le(c-d(q\|p))_+/\gamma(p)\), with equality in the limit \(y\downarrow\gamma(p)\).  If \(0<c<d(q\|1/2)\), the objective is zero near \(y=0\) and its
left derivative at \(y=\gamma(q)\) equals
\(-c/\gamma(q)^2\).  Hence the endpoint \(y=\gamma(q)\) cannot
maximize the objective, and the maximizer lies in the interior of
\(\{0<y<\gamma(q):H_q(y)<c\}\); for \(q=0\), where \(\gamma(q)=+\infty\), the objective is instead bounded by \(c/y\to0\) as \(y\to\infty\), and the maximizer is again interior.

The first-order
condition is \(-yH_q'(y)=c-H_q(y)\), or equivalently \(H_q(y)-yH_q'(y)=c\).  Writing \(p=p_y\), a direct calculation yields 
$$
d(q\|p)-\gamma(p)(q-p)=\Hb(p)-\Hb(q).
$$
 Therefore the first-order condition can be recast as \(\Hb(p)=\Hb(q)+c\).  Since \(\Hb\) is strictly increasing on \((0,1/2)\), this equation has a unique
solution \(p_c(q)\in[q,1/2)\), equal to \(q\) precisely when \(c=0\). At this point the previous display simplifies to
$$
c-d(q\|p_c(q))=\gamma(p_c(q))(p_c(q)-q),$$ 
and hence \(f^*(c,q)=p_c(q)-q\). 

The case \(c=0\) gives \(p_c(q)=q\) and \(f^*(0,q)=0\).  At the boundary \(c=d(q\|1/2)=\log2-\Hb(q)\), the identity above gives, for every \(0<p<1/2\),
\[
        \frac{c-d(q\|p)}{\gamma(p)}
        =(p-q)+\frac{\log2-\Hb(p)}{\gamma(p)}
        \le(p-q)+\Big(\tfrac12-p\Big)=\tfrac12-q,
\]
where the inequality holds because \(\varphi(p):=(\tfrac12-p)\gamma(p)-(\log2-\Hb(p))\) satisfies \(\varphi(1/2)=0\) and \(\varphi'(p)=-(\tfrac12-p)/(p(1-p))<0\) on \((0,1/2)\), whence \(\varphi\ge0\) there.  Hence \(f^*(c,q)\le1/2-q\).  Conversely, letting \(c'\uparrow c\) gives \(p_{c'}(q)\uparrow1/2\), so \(f^*(c',q)=p_{c'}(q)-q\to1/2-q\), and since \(f^*(\cdot,q)\) is nondecreasing, \(f^*(c,q)=1/2-q\), the boundary value. If
\(c>d(q\|1/2)\), then values \(p<1/2\) arbitrarily close to \(1/2\) satisfy
\(d(q\|p)<c\). Along such values, \(\gamma(p)\downarrow0\), while
\(c-d(q\|p)\) stays bounded away from zero. Thus, along such values, the supremum is infinite.

Finally, suppose \(q\in\mathcal Q\setminus\mathcal Q_s=[1/2,1]\), the complement of the stable region.  Then the fitted parameter \(p=q\) is infeasible, and since \(\partial_p d(q\|p)=(p-q)/(p(1-p))<0\) for \(0<p<1/2\le q\), the map \(p\mapsto d(q\|p)\) is strictly decreasing on \((0,1/2)\).  Hence, for every \(y>0\),
\[
        H_q(y)=\inf\{d(q\|p):p_y\le p<1/2\}
        =\lim_{p\uparrow1/2}d(q\|p)=d(q\|1/2)=c_{\rm crit}(q),
\]
a constant in \(y\).  Therefore \(f^*(c,q)=\sup_{y>0}(c-c_{\rm crit}(q))_+/y\), which equals \(0\) when \(c\le c_{\rm crit}(q)\) and, letting \(y\downarrow0\), equals \(+\infty\) when \(c>c_{\rm crit}(q)\).
\end{proof}

\begin{proof}[Proof of Theorem~\ref{thm:gaussian}]
For \(t\ge0\), define \(F_{m,v}(t)=\inf_{\mu<0,\sigma^2>0}\{I_{\mu,\sigma^2}(m,v)+\gamma(\mu,\sigma^2)t\}\).  By the threshold representation \eqref{eq:threshold-representation}, the profile is the
smallest \(t\ge0\) such that \(F_{m,v}(t)\ge c\), with value \(+\infty\) if no
such \(t\) exists.

Write \(s^2=\sigma^2\). The objective in the definition of \(F_{m,v}\) is
\[
        \frac{(m-\mu)^2-4\mu t+v}{2s^2}
        -\frac12-
        \frac12\log\frac{v}{s^2}.
\]
For fixed \(s^2\), the unconstrained minimizer over \(\mu\) is
\(\mu=m+2t\). This minimizer is feasible (\(\mu<0\)) precisely when
\(0\le t<-m/2\); for \(m\ge0\) this interval is empty. When it is nonempty, the minimized numerator is \(v-4t(m+t)\), and minimization over \(s^2\) gives
\[
        F_{m,v}(t)
        =
        \frac12\log\left(1-\frac{4t(m+t)}{v}\right),
        \qquad 0\le t<-\frac m2 .
\]
For \(t\ge -m/2\), the infimum over \(\mu<0\) is approached as
\(\mu\uparrow0\). The minimized numerator is then \(m^2+v\), and hence
\[
        F_{m,v}(t)
        =
        \frac12\log\left(1+\frac{m^2}{v}\right),
        \qquad t\ge -\frac m2 .
\]
Thus \(F_{m,v}\) is nondecreasing with maximal value \(c_{\rm crit}(m,v):=\frac12\log(1+m^2/v)\), attained for all \(t\ge -m/2\).  A finite solution exists if and only if \(c\le c_{\rm crit}(m,v)\), equivalently \(m^2\ge v(e^{2c}-1)\); otherwise \(F_{m,v}<c\) throughout and \(f^*=+\infty\).  Suppose \(c\le c_{\rm crit}(m,v)\).  Solving \(\frac12\log(1-4t(m+t)/v)=c\) on \([0,-m/2)\) gives \(t^2+mt+\frac v4(e^{2c}-1)=0\), whose relevant root is
\[
        t_\star=\frac{-m-\sqrt{m^2-v(e^{2c}-1)}}{2}.
\]
For \(m<0\) we have \(t_\star\in[0,-m/2]\), the minimal safe coefficient.  For \(m\ge0\) the interval \([0,-m/2)\) is empty, so \(F_{m,v}\equiv c_{\rm crit}(m,v)\ge c\) already at \(t=0\) and the minimal solution is \(t=0\); correspondingly \(t_\star\le0\).  In both cases \(f^*(c,m,v)=(t_\star)_+\) when \(m^2\ge v(e^{2c}-1)\), and \(+\infty\) otherwise.
\end{proof}

\begin{proof}[Proof of Theorem~\ref{thm:expdiff}]
For \(t\ge0\), define \(F_{u,v}(t)=\inf_{\mu>\lambda>0}\{I_{\mu,\lambda}(u,v)+(\mu-\lambda)t\}\).  By the threshold representation \eqref{eq:threshold-representation}, the profile is the
smallest \(t\ge0\) such that \(F_{u,v}(t)\ge c\), with value \(+\infty\) if no
such \(t\) exists.

The objective in the definition of \(F_{u,v}\) is
\[
        \mu(u+t)-1-\log(\mu u)
        +
        \lambda(v-t)-1-\log(\lambda v).
\]
If \(0\le t<(v-u)/2\) (an empty range when \(u\ge v\)), the unconstrained minimizers are \(\mu=1/(u+t)\) and \(\lambda=1/(v-t)\).  They satisfy \(\mu>\lambda\), and substitution gives
\[
        F_{u,v}(t)
        =
        \log\frac{u+t}{u}
        +
        \log\frac{v-t}{v}
        =
        \log\frac{(u+t)(v-t)}{uv}.
\]

It remains to treat \(t\ge(v-u)/2\). Write \(\mu=r+d\), \(\lambda=r-d\), with \(r>d>0\).  Then the objective is
\[
        r(u+v)+d(u-v+2t)-2
        -\log\bigl((r^2-d^2)uv\bigr).
\]
Since \(u-v+2t\ge0\) and \(r^2-d^2<r^2\), the objective is bounded from below by \(r(u+v)-2-\log(r^2uv)\).  This lower bound is approached as \(d\downarrow0\). Minimizing it over \(r>0\)
gives \(r=2/(u+v)\), and hence
\[
        F_{u,v}(t)
        =
        \log\frac{(u+v)^2}{4uv},
        \qquad t\ge\frac{v-u}{2}.
\]
Thus \(F_{u,v}\) is nondecreasing with maximal value \(c_{\rm crit}(u,v):=\log((u+v)^2/(4uv))\), attained for all \(t\ge(v-u)/2\).  A finite solution exists if and only if \(c\le c_{\rm crit}(u,v)\), equivalently \((v-u)^2\ge4uv(e^c-1)\); otherwise \(F_{u,v}<c\) throughout and \(f^*=+\infty\).  Suppose \(c\le c_{\rm crit}(u,v)\).  Solving \(\log((u+t)(v-t)/(uv))=c\) on \([0,(v-u)/2)\) gives \(t^2-(v-u)t+uv(e^c-1)=0\), whose relevant root is
\[
        t_\star=\frac{v-u-\sqrt{(v-u)^2-4uv(e^c-1)}}{2}.
\]
For \(u<v\) we have \(t_\star\in[0,(v-u)/2]\), the minimal safe coefficient.  For \(u\ge v\) the interval \([0,(v-u)/2)\) is empty, so \(F_{u,v}\equiv c_{\rm crit}(u,v)\ge c\) already at \(t=0\) and the minimal solution is \(t=0\); correspondingly \(t_\star\le0\).  In both cases \(f^*(c,u,v)=(t_\star)_+\) when \((v-u)^2\ge4uv(e^c-1)\), and \(+\infty\) otherwise.
\end{proof}

\end{document}